\documentclass[12pt]{amsart}
\usepackage{amsmath,amsfonts,euscript,dsfont,amscd,amsthm,amssymb,upref,graphics}
\usepackage[all]{xy}
%%%%%%%%%%%%%%%%%%%%%%%%%%%%%%%%%%%%%%%%%%%%%%%%%%%%%%%%%%%%%%%%%%%%
\usepackage{color}

%%%%%%%%%%%%%%%%%%%%%%%%%%%%%%%%%%%%%%%%%%%%%%%%%%%%%%%%%%%%%%%%%%%%

\theoremstyle{definition}

\swapnumbers

\theoremstyle{plain}

\newtheorem{theorem}{Theorem}[section]

\newtheorem{proposition}[theorem]{Proposition}
\newtheorem{lemma}[theorem]{Lemma}
\newtheorem{corollary}[theorem]{Corollary}

 % This creates the counter "sub"

\theoremstyle{definition}

\newtheorem{definition}[theorem]{Definition}

\newtheorem{parag}[theorem]{}
\newtheorem{example}[theorem]{Example}

\newtheorem{notations}[theorem]{Notations}

\newtheorem{remark}[theorem]{Remark}

\theoremstyle{remark}

\newtheorem*{smallremark}{Remark}

%%%%%%%%%%%%%%%%%%%%%%%%%%%%%%%%%%%%%%%%%%%%%%%%%%%%%%%%%%%%%%%%%%%%
%
{\begin{enumerate}\setlength{\itemsep}{#1}}{\end{enumerate}}
%%%%%%%%%%%%%%%%%%%%%%%%%%%%%%%%%%%%%%%%%%%%%%%%%%%%%%%%%%%%%%%%%%%%
\newenvironment{enumerata}%
{\begin{enumerate}

}{\end{enumerate}}
%%%%%%%%%%%%%%%%%%%%%%%%%%%%%%%%%%%%%%%%%%%%%%%%%%%%%%%%%%%%%%%%%%%%
%
{\begin{enumerata}\setlength{\itemsep}{#1}}{\end{enumerata}}
%%%%%%%%%%%%%%%%%%%%%%%%%%%%%%%%%%%%%%%%%%%%%%%%%%%%%%%%%%%%%%%%%%%%

%\input macros

\newcommand{\setspec}[2]{\big\{\,#1\, \mid \,#2\, \big\}}

\newcommand{\Integ}{\ensuremath{\mathbb{Z}}}
\newcommand{\Nat}{\ensuremath{\mathbb{N}}}

\newcommand{\Comp}{\ensuremath{\mathbb{C}}}

\newcommand{\aff}{\ensuremath{\mathbb{A}}}
\newcommand{\proj}{\ensuremath{\mathbb{P}}}

\newcommand{\Aeul}{\EuScript{A}}

\newcommand{\Ceul}{\EuScript{C}}
\newcommand{\Deul}{\EuScript{D}}

\newcommand{\Neul}{\EuScript{N}}

\newcommand{\Teul}{\EuScript{T}}

\newcommand{\Veul}{\EuScript{V}}

\renewcommand{\epsilon}{\varepsilon}
\renewcommand{\phi}{\varphi}
\renewcommand{\emptyset}{\varnothing}

\newlength{\mylength}
\settowidth{\mylength}{$\,$}

\newcommand{\rien}[1]{}

\addtolength{\textheight}{1.5cm}
\setlength{\textwidth}{15.5cm}
\addtolength{\oddsidemargin}{-1cm}
\addtolength{\evensidemargin}{-1cm}

\raggedbottom

\CompileMatrices

\begin{document}
\renewcommand{\baselinestretch}{1.07}

%%%%%%	TOPMATTER:   %%%%%%%%%%%%%%%%%%%%%%%%%

\title[Rational polynomials of simple type: a combinatorial proof]%
{Rational polynomials of simple type:\\ a combinatorial proof}

\author{Pierrette Cassou-Nogu\`es}
\author{Daniel Daigle}

\address{IMB, Universit\'e de Bordeaux \\
351 Cours de la lib\'eration, 33405, Talence Cedex, France}
\email{Pierrette.Cassou-nogues@math.u-bordeaux1.fr}

\address{Department of Mathematics and Statistics\\
University of Ottawa\\
Ottawa, Canada\ \ K1N 6N5}
\email{ddaigle@uottawa.ca}

\thanks{Research of the first author partially supported by Spanish grants
MTM2013-45710-C02-01-P and  MTM2013-45710-C02-02-P}
\thanks{Research of the second author supported by grant RGPIN/104976-2010 from NSERC Canada.}

{\renewcommand{\thefootnote}{}
\footnotetext{2010 \textit{Mathematics Subject Classification.}
Primary: 14R10, 14H50.}}

{\renewcommand{\thefootnote}{}
\footnotetext{ \textit{Key words and phrases:} Affine plane, birational morphism, plane curve.}}

\begin{abstract} 
We determine the Newton trees of the rational polynomials of simple type, thus filling a gap in 
the proof of the classification of these polynomials given by Neumann and Norbury.
\end{abstract}

\maketitle
  
\vfuzz=2pt

\section{Introduction}

A polynomial map $f:\Comp^2 \to \Comp$ is {\it rational\/} if its generic fiber is of genus zero.
It is {\it of simple type\/} if, when extended to a morphism $\overline{f}:X \to \proj^1$ of a compactification $X$ of $\Comp ^2$,
the restriction of $\overline{f}$ to each curve of the divisor $D=X\setminus \Comp^2$ is either of degree $0$ or $1$.
The curves  on which $\overline{f}$ is non constant are called {\it dicriticals}.
The  {\it degree}  of a dicritical $C$ is the degree of the restriction of  $\overline{f}$ on $C$.
A {\it simple} rational polynomial is a rational polynomial all of whose dicriticals have  degree $1$.
We say that a polynomial is {\it ample} if it has at least $3$ dicriticals of degree $1$.

A classification of rational polynomials of simple type appears in \cite{MiySugie:GenRatPolys}, but is incomplete.
This has been noticed by  Neumann and Norbury, who have presented a new classification in \cite{NeumannNorbury:simple}.
Actually, the larger part of  \cite{NeumannNorbury:simple} is devoted to the
determination of the possible splice diagrams of rational polynomials that are ample and of simple type;
at the end, they give the equations of the polynomials having these splice diagrams.

However, the proof given in \cite{NeumannNorbury:simple} appears to be incomplete.
In the first paragraph after the proof of Lemma 3.1,
the authors construct a rational map $\pi: X \to \proj^1$, where $X$ is a compactification of $\Comp^2$.
Note that the rational map $\pi$ that they consider is not defined by extending a morphism $\Comp^2 \to \proj^1$,
and that there are no obvious reasons to think that $\pi$ is defined at all points of $\Comp^2$.
Then the authors write:
``If $\pi$ is not a morphism then we blow up $X$ to get a morphism.
Rather than introducing further notation for this blow-up we will assume we began with this blow-up and call it $X$.''
The last sentence contains the hidden assumption that the new $X$ is still a compactification of $\Comp^2$, as the old $X$ was.
In other words, it is implicitly claimed that all points of indeterminacy of the original rational map $\pi$
belong to  $X \setminus \Comp^2$.
That implicit claim is not proved in \cite{NeumannNorbury:simple}, and in fact we don't know if it is true.
Since the determination of the splice diagrams crucially depends on that implicit claim,
there is a gap in the proof of the main result of \cite{NeumannNorbury:simple}.

The present article studies a type of combinatorial object that we call ``abstract Newton tree at infinity''.
These are abstract decorated trees and are closely related to splice diagrams.

Section 2 develops a general theory of abstract Newton trees at infinity; the main result of that section is Theorem~\ref{918235071yrsj2dhry}.
Section 3 is devoted to describing all Newton trees whose multiplicity is maximal;
these trees are described by Theorem~\ref{8c3r8273d287ed2uq98}, which is the main result of the article.
Note that Sections 2 and 3 are purely combinatorial and constitute the core of the paper.

Section 4 applies Theorem~\ref{8c3r8273d287ed2uq98} to the problem of classifying rational polynomials of simple type.
The first part of that section % discusses the link between abstract Newton trees at infinity and splice diagrams, and
describes a process that associates an abstract Newton tree at infinity $\Teul(F)$ to each primitive polynomial $F \in \Comp[x,y]$ with at least two points at infinity.
The process is in two steps: $F$ determines a family $\big( \Omega_\lambda \big)_{\lambda \in \Comp}$ of splice diagrams, and this family determines
the Newton tree $\Teul(F)$.

Prop.\ \ref{jkcnvo2ws9cj} states that if $F$ is a rational polynomial of simple type that is not a variable then
$\Teul(F)$ satisfies the hypothesis of Theorem~\ref{8c3r8273d287ed2uq98}
(so $\Teul(F)$ is among the trees given by that result).

At the end of Section 4 we observe that the Newton trees given by Theorem~\ref{8c3r8273d287ed2uq98} correspond exactly to the splice diagrams
given by Neumann and Norbury, except for the fact that we don't restrict ourselves to the case of ample polynomials, as Neumann and Norbury did.
We may therefore confirm that the splice diagrams given in \cite{NeumannNorbury:simple} are correct---this needed
to be ascertained, because of the gap in the proof that we pointed out in the above discussion.

More generally, we are interested in the problem of classifying the Newton trees $\Teul(F)$ of rational polynomials $F$ that are not variables.
The present paper settles the special case where $F$ is of simple type.
In a subsequent article we use the same approach and classify the trees of rational polynomials of quasi-simple type
(note that Sasao classifies a subclass of the rational polynomials of quasi-simple type in \cite{Sasao_QuasiSimple2006}).
% Our method is very elementary and there is hope to use it to solve more cases of the classification problem.
Our method is very elementary and we hope to use it to solve more cases of the classification problem.

\section{Abstract Newton trees at infinity}

This section and the next develop the theory of abstract Newton trees at infinity in a purely combinatorial manner.
Readers who want to see how these notions are related to splice diagrams (and hence to geometry) are invited to
consult Section~4.

\begin{definition} \label {p9823p98p2d}
Consider a rooted tree $\Teul$
with two kinds of 0-dimensional cells called vertices and arrows,
and 1-dimensional cells called edges, where each edge links two distinct 0-dimensional cells.
We say that $\Teul$ is an
{\it abstract Newton tree at infinity\/} if all of the following conditions are satisfied,
including conditions (1) and (2) below.

There are finitely many vertices, arrows, and edges.
We denote by $\Veul$ the set of vertices and by $\Aeul$ the set of arrows,
and we assume that $\Veul \neq \emptyset$, $\Aeul \neq \emptyset$ and $\Veul \cap \Aeul = \emptyset$.
The root, denoted by $v_0$, belongs to $\Veul$. 
Given $v\in \Veul\cup \Aeul$, the number of edges incident to $v$ is called its {\it valency\/}
and is denoted by $\delta_v$.
The valency of each $v\in \Veul \setminus \{v_0\}$ is at least $2$, and the valency of each $v\in \Aeul$ is~$1$.

By a {\it path}, we always mean a simple path, i.e., a path that does not traverse the same edge more than once.

There is a partial order on $\Veul\cup \Aeul$:
given distinct elements $v,v'$ of $(\Veul\cup \Aeul) \setminus \{v_0\}$,
we say that $v'>v$ if $v$ is on the path between $v_0$ and $v'$.
Moreover, if $v \in (\Veul\cup \Aeul)\setminus \{v_0\}$ then $v>v_0$.

The edges and the arrows are decorated.
\begin{enumerate}

\item The arrows bear decorations $(0)$ or $(1)$; when the decoration of an arrow does not appear in a picture,
that decoration is assumed to be $(1)$. We denote by $\Aeul_0$ the set of arrows decorated with $(0)$. An edge  linking a vertex and  an arrow decorated with $(0)$ is called a {\it dead end}. 
Moreover, the following are required to hold.
\begin{enumerate}

\item For each vertex $v \in \Veul$, there exists an arrow $\alpha$ decorated with $(1)$ such that $\alpha>v$.
\item There is at most one dead end incident to a given vertex.
\end{enumerate}

\item The edges bear at each extremity a decoration which is an element of $\mathbb{Z}$;
when a decoration does not appear in a picture, it is assumed to be $1$. 
If $e$ is an edge incident to a vertex or arrow $v$,
the decoration of $e$ near $v$ is said to be a ``decoration near $v$''.
We denote this number by $q(e,v)$.
The following are required to hold.

\begin{enumerate}

\item All decorations near the root are $1$.

\item The decoration near any arrow is $1$.

\item \label {1823urcpiwd} 
Let $v\in \Veul$. If $e,e'$ are distinct edges incident to $v$ then 
$q(e,v)$ and $q(e',v)$ are relatively prime.
Let $E$ be the set of edges 
of the form $[v,v']$ where $v' \in \Veul \cup \Aeul$ and $v<v'$.
Then $E \neq \emptyset$, $q(e,v) \ge1$ for all $e \in E$ and at most one element $e$ of $E$ satisfies $q(e,v)>1$.
If $\epsilon \in E$ is such that $q(\epsilon,v) = \max_{e \in E} q(e,v)$, we call $\epsilon$ a {\it leading edge\/} at $v$
(it may be that every element of $E$ is leading).
If there is a dead end $e$ incident to $v$ then $e$ is a leading edge at $v$.

\item Let $e$ be an edge between two vertices $u$ and $v$.
The decorations near $u$ or $v$ but not on $e$ are said to be adjacent to $e$.
The {\it edge determinant\/} of $e$ is the product of the decorations on $e$,
minus the product of the decorations adjacent to $e$. All edge determinants are required to be negative.

\end{enumerate}

\end{enumerate}
\end{definition}

\begin{example}
Here is an abstract Newton tree at infinity with $4$ vertices, $5$ arrows, and $8$ edges:
$$
\setlength{\unitlength}{1.5mm}
\begin{picture}(60,24)(-25,-13)
\put(-10,0){\circle{1}}
\put(0,0){\circle{1}}
\put(10,0){\circle{1}}
\put(20,10){\circle{1}}
\put(-10.5,0){\vector(-1,0){9.5}}
\put(-10,-.5){\vector(0,-1){9.5}}
\put(10.5,0){\vector(1,0){9.5}}
\put(10.4,-.4){\vector(1,-1){9.5}}
\put(20.5,10){\vector(1,0){9.5}}
\put(10.4,0.4){\line(1,1){9.2}}
\put(-9.5,0){\line(1,0){9}}
\put(.5,0){\line(1,0){9}}
\put(0,-1){\makebox(0,0)[t]{\scriptsize $v_0$}}
\put(-10,-10.5){\makebox(0,0)[t]{\tiny $(0)$}}
\put(-8,.5){\makebox(0,0)[b]{\tiny $-2$}}
\put(-9.5,-2){\makebox(0,0)[l]{\tiny $3$}}
\put(8,.5){\makebox(0,0)[b]{\tiny $3$}}
\put(11.5,-1.7){\makebox(0,0)[rt]{\tiny $4$}}
\put(18,9){\makebox(0,0)[r]{\tiny $-5$}}
\put(22,10.5){\makebox(0,0)[b]{\tiny $2$}}
\end{picture}
$$
{\setlength{\unitlength}{1.5mm}
Note that each arrow is represented by an arrowhead
``\begin{picture}(2,1)(-1,-.5) \put(0.5,0){\vector(1,0){0}} \end{picture}'', not by an arrow
``\begin{picture}(5,1)(0,-.5) \put(0,0){\vector(1,0){5}} \end{picture}'',
so that ``\begin{picture}(5.5,1)(-.5,-.5) \put(0,0){\circle{1}} \put(0.5,0){\vector(1,0){4.5}} \end{picture}''
represents an edge joining a vertex 
``\begin{picture}(2,1)(-1,-.5) \put(0,0){\circle{1}} \end{picture}''
to an arrow
``\begin{picture}(2,1)(-1,-.5) \put(0.5,0){\vector(1,0){0}} \end{picture}''.}
We stress that the above picture completely defines an abstract Newton tree at infinity,
because all missing decorations of edges (resp.\ of arrows) are assumed to be $1$ (resp.\ $(1)$).
\end{example}

Consider an abstract Newton tree at infinity.

\begin{definition}
Let us agree that empty products of numbers are equal to $1$.
\mbox{\ }
\begin{enumerate}

\item[(i)] Given $v \in \Veul \cup \Aeul_0$ and an edge $e$ incident to $v$,
recall that $q(e,v)$ denotes the decoration of $e$ near $v$ and let
$Q(e,v) = \prod_{\epsilon \in E\setminus\{e\}} q(\epsilon, v)$, where $E$ is the set of edges incident to $v$.

\item[(ii)] Let $\gamma$ be a path. We say that an edge $\epsilon$ is {\it incident to $\gamma$} if $\epsilon$ is not in $\gamma$
and $\epsilon$ is incident to some vertex $u$ of $\gamma$.
In this case, we define $q(\epsilon,\gamma) = q(\epsilon,u)$, where $u$ is the unique vertex of $\gamma$ to which
$\epsilon$ is incident.

\item[(iii)] Let $v \neq v' \in \Veul$ and consider the path $\gamma$ from $v$ to $v'$.
We say that $\gamma$ is a {\it linear path\/} if each vertex $u$ in $\gamma$ but different from $v,v'$ has valency $2$.
If $\gamma$ is a linear path from $v$ to $v'$ then we define $\det(\gamma) = q(e,v) q(e',v') - Q(e,v)Q(e',v')$,
where $e$ (resp.\ $e'$) is the unique edge in $\gamma$ which is incident to $v$ (resp.\ $v'$).

\item[(iv)] Given $v \in \Veul \cup \Aeul_0$ and $\alpha \in \Aeul\setminus\Aeul_0$,
we set
$$
\textstyle
x_{v,\alpha} = \prod_{\epsilon \in E} q(\epsilon,\gamma)
\quad \text{and} \quad
\hat x_{v,\alpha} = \prod_{\epsilon \in \hat E} q(\epsilon,\gamma)
$$
where $\gamma$ is the path from $v$ to $\alpha$,
$E$ is the set of edges incident to $\gamma$ and
$$
\hat E =  \text{set of edges incident to $\gamma$ but not incident to $v$}.
$$
% \begin{align*}
% E &= \text{set of edges incident to $\gamma$,} \\
% \hat E &=  \text{set of edges incident to $\gamma$ but not incident to $v$}.
% \end{align*}
Observe that 
$x_{v,\alpha} = Q(e,v) \hat x_{v,\alpha}$,
where $e$ is the unique edge incident to $v$ which is in $\gamma$.

\item[(v)] Given $v\in \Veul\cup \Aeul_0$, we define the {\it multiplicity\/} $N_v$ of $v$ by
$ N_v = \sum_{ \alpha \in \Aeul \setminus \Aeul_0} x_{v,\alpha}$.

\item[(vi)] We define the {\it degree\/} of the tree to be $N_{v_0}$ where $v_0$ is the root.
Note that $N_{v_0} \ge1$ 
% It can be shown that this is a positive number 
(proof: by \ref{p9823p98p2d}\eqref{1823urcpiwd} we have $x_{v_0,\alpha} \ge1$ for each $\alpha \in \Aeul\setminus\Aeul_0$,
so $ N_{v_0} = \sum_{ \alpha \in \Aeul \setminus \Aeul_0} x_{v_0,\alpha} \ge |\Aeul \setminus \Aeul_0|\ge1$).

\end{enumerate}
\end{definition}

%%%%%%%%%%%%%%%%%%%%%%%%%%%%%%%%%%%%%%%%%%%%%%%%%%%%%%%%%%%%%%%%%%%%%%%%%%%%
%%%%%%%%%%%%%%%%%%%%%%%%%%%%%%%%%%%%%%%%%%%%%%%%%%%%%%%%%%%%%%%%%%%%%%%%%%%%

\begin{proposition}  \label {kuwdhr12778}
Let $v \neq v' \in \Veul$, let $\gamma$ be a linear path from $v$ to $v'$,
let $q=q(e,v)$, $q'=q(e',v')$, $Q=Q(e,v)$ and $Q'=Q(e',v')$ 
where $e$ (resp.\ $e'$) is the unique edge in $\gamma$ which is incident to $v$ (resp.\ $v'$),
and let
\begin{align*}
A &= \setspec{ \alpha \in \Aeul\setminus\Aeul_0 }{ \text{the path from $v$ to $\alpha$ does not contain $v'$} }, \\
A' &= \setspec{ \alpha \in \Aeul\setminus\Aeul_0 }{ \text{the path from $v$ to $\alpha$ contains $v'$} } 
\end{align*}
(see Figure~\ref{83473yruer938d}).  Then the following hold.
\begin{enumerata}

\item \label {8723d4917t3rh} 
 For each $\alpha \in A$,
$x_{v,\alpha} = q \hat x_{v',\alpha}$
and $x_{v',\alpha} = Q' \hat x_{v',\alpha}$.

\item \label {u66d3swxsd}
For each $\alpha \in A'$, $x_{v',\alpha} = q' \hat x_{v,\alpha}$
and $x_{v,\alpha} = Q \hat x_{v,\alpha}$.

\item \label {238dhwi912F}
$\left| \begin{matrix} q & Q' \\ N_v & N_{v'} \end{matrix} \right| = 
\det(\gamma) \sum_{\alpha \in A'} \hat x_{v,\alpha}$\ \ and\ \ 
$\left| \begin{matrix} q' & Q \\ N_{v'} & N_{v} \end{matrix} \right| = 
\det(\gamma) \sum_{\alpha \in A} \hat x_{v',\alpha}$.

\item \label {61324d1g3r}
 If $v<v'$ then $q>0$, $Q'>0$, $\det\gamma<0$ and
$\left| \begin{matrix} q & Q' \\ N_v & N_{v'} \end{matrix} \right| < 0$.

\end{enumerata}
\end{proposition}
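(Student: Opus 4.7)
For parts~(a) and~(b), the key observation is that, since the underlying graph is a tree, for $\alpha\in A$ the path from $v'$ to $\alpha$ is the concatenation of $\gamma$ with the path $\mu$ from $v$ to $\alpha$. The identity $x_{v',\alpha}=Q'\hat x_{v',\alpha}$ is then immediate from the general formula $x_{w,\alpha}=Q(\epsilon,w)\hat x_{w,\alpha}$ recorded in item~(iv). For $x_{v,\alpha}=q\hat x_{v',\alpha}$, I would compare the two products vertex-by-vertex along this concatenated path: the interior vertices of $\gamma$ have valency~$2$ and contribute no incident edges, the interior vertices of $\mu$ contribute identical factors to both expressions, and at $v$ the two products differ by exactly the factor $q=q(e,v)$, since $x_{v,\alpha}$ takes every edge at $v$ except the first edge $f$ of $\mu$ while $\hat x_{v',\alpha}$ excludes both $e$ and $f$. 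Part~(b) is the same argument with the roles of $v$ and $v'$ swapped.

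For part~(c), I partition $\Aeul\setminus\Aeul_0=A\sqcup A'$ and substitute the identities from (a) and (b) into the defining sums to obtain
\begin{equation*}
N_v=q\sum_{\alpha\in A}\hat x_{v',\alpha}+Q\sum_{\alpha\in A'}\hat x_{v,\alpha},\qquad N_{v'}=Q'\sum_{\alpha\in A}\hat x_{v',\alpha}+q'\sum_{\alpha\in A'}\hat x_{v,\alpha}.
\end{equation*}
Expanding $qN_{v'}-Q'N_v$, the $A$-terms cancel and the $A'$-terms combine with scalar $qq'-QQ'=\det(\gamma)$, giving the first identity; the second identity follows by symmetry.

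For part~(d), positivity of $q$ and $Q'$ is immediate from axiom~(2)(c): since $v<v'$, the edge $e$ is upward at $v$ and so $q\geq 1$, and at $v'$ every edge other than the parent edge $e'$ is upward so each factor of $Q'$ is $\geq 1$. The negativity of $\det(\gamma)$ is the substantive point, which I would prove by induction on the length $k$ of $\gamma$; the base case $k=1$ is axiom~(2)(d) applied to the single edge of $\gamma$. For $k\geq 2$, write $\gamma=e_1\cdot\gamma_1$ with $e_1=[v,w_1]$ and set $a_i=q(e_i,w_{i-1})$, $b_i=q(e_i,w_i)$. Axiom~(2)(d) applied to $e_1$ gives $a_1b_1<Qa_2$, and the induction hypothesis applied to the linear sub-path $\gamma_1$ from $w_1$ to $v'$ (valid since $w_1<v'$) gives $a_2b_k<b_1Q'$. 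If $b_1>0$, multiplying these inequalities and dividing by $a_2b_1>0$ yields $a_1b_k<QQ'$; if $b_1\leq 0$, then $a_2b_k<b_1Q'\leq 0$ forces $b_k<0$, so $a_1b_k\leq 0<QQ'$. In either case $\det(\gamma)=a_1b_k-QQ'<0$.

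Finally, the strict inequality in~(d) follows from the identity in~(c) once I check that $\sum_{\alpha\in A'}\hat x_{v,\alpha}>0$. Every factor $q(\epsilon,u)$ appearing in such a product is attached to an edge $\epsilon$ at an interior vertex $u$ of the path from $v$ to $\alpha$ with $\epsilon$ off the path; since $u$'s parent edge lies on the path, $\epsilon$ must be a child edge at $u$, and axiom~(2)(c) gives $q(\epsilon,u)\geq 1$. Axiom~(1)(a) applied to $v'$ produces an arrow $\alpha>v'$ decorated $(1)$, which lies in $A'$, so the sum is at least $1$ and the expression in~(c) is strictly negative. The principal obstacle in the proof is the sign-case analysis driving the induction for $\det(\gamma)<0$, which is where the negativity of individual edge determinants has to be leveraged carefully because intermediate ``downward'' decorations can be nonpositive.
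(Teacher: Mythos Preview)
Your proofs of parts~(a)--(c) are correct and essentially identical to the paper's (which in fact leaves (a) to the reader and derives (c) by the same substitution you use). Your treatment of the positivity of $q$, $Q'$ and of the final strict inequality in~(d) is also fine.

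The inductive argument for $\det\gamma<0$ has a gap. In your case $b_1\le 0$ you assert $a_1b_k\le 0<QQ'$, but nothing guarantees $Q>0$: when $v\neq v_0$, the factor of $Q=Q(e,v)$ coming from the parent edge at $v$ is unconstrained in sign by the axioms, so $Q$ (and hence $QQ'$) may well be negative. Your conclusion $a_1b_k<QQ'$ is still true in that sub-case, but it requires a further argument: from $a_1b_1<Qa_2$ and $b_1<0$, $Q\le 0$ one gets $a_1|b_1|>|Q|a_2$, and combining with $a_2|b_k|>|b_1|Q'$ gives $a_1|b_k|>|Q|Q'$, hence $a_1b_k<QQ'$. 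So the approach is salvageable, but the case analysis is more delicate than you indicated.

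The paper sidesteps this entirely. With $v''$ the vertex adjacent to $v$ along $\gamma$, $a=q(e,v'')$, $a'=Q(e,v'')$, $D=\det e=qa-Qa'$ and $D'=\det\gamma'=q'a'-aQ'$ (where $\gamma'$ is the sub-path from $v''$ to $v'$), one checks the identity
\[
\det\gamma \;=\; qq'-QQ' \;=\; \frac{qD'+Q'D}{a'}\,.
\]
Since $q,Q',a'>0$ by axiom~(2)(c) and $D,D'<0$ by the edge-determinant axiom and the induction hypothesis, negativity is immediate with no sign cases. This algebraic identity is cleaner than multiplying inequalities and is worth knowing.
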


\begin{figure}[h]
\setlength{\unitlength}{1mm}
\raisebox{0\unitlength}{\begin{picture}(80,30)(-40,-15)
\put(-10,0){\circle{1}}
\put(10,0){\circle{1}}
\put(-9.5,0){\line(1,0){19}}
\put(-10.632,0.316){\line(-2,1){14.367}}
\put(-10.632,-.316){\line(-2,-1){14.367}}
\multiput(-11,-.333333)(-1,-.3333333){9}{\makebox(0,0){\footnotesize .}} 
\put(-20,-3.33333333){\vector(-3,-1){0}}
\put(-22,-4){\makebox(0,0){\tiny $(0)$}}
\multiput(11,-.33333333)(1,-.3333333){9}{\makebox(0,0){\footnotesize .}} 
\put(20,-3.33333333){\vector(3,-1){0}}
\put(22,-4){\makebox(0,0){\tiny $(0)$}}
\put(10.632, 0.316){\line(2,1){14.367}}
\put(10.632,-0.316){\line(2,-1){14.367}}
\put(-30,0){\oval(10,30)}
\put(30,0){\oval(10,30)}
\put(-30,0){\makebox(0,0){\footnotesize $A$}}
\put(30,0){\makebox(0,0){\footnotesize $A'$}}
\put(-15,0){\makebox(0,0)[r]{\footnotesize $Q$}}
\put(-8,-1){\makebox(0,0)[t]{\footnotesize $q$}}
\put(-10,1){\makebox(0,0)[b]{\footnotesize $v$}}
\put(0,1){\makebox(0,0)[b]{\footnotesize $\gamma$}}
\put(10,1){\makebox(0,0)[b]{\footnotesize $v'$}}
\put(8,-1){\makebox(0,0)[t]{\footnotesize $q'$}}
\put(15,0){\makebox(0,0)[l]{\footnotesize $Q'$}}
\end{picture}}
\caption{Schematic representation of the situation of Prop.~\ref{kuwdhr12778}.
The dotted arrows indicate that there may or may not be a dead end attached to $v$ (resp.\ to $v'$).}
\label {83473yruer938d}
\end{figure}
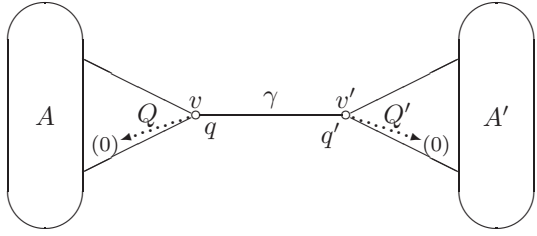

\begin{proof}
Verification of \eqref{8723d4917t3rh} is left to the reader, 
 and \eqref{u66d3swxsd} follows from \eqref{8723d4917t3rh} by symmetry.
Using \eqref{8723d4917t3rh}  and \eqref{u66d3swxsd}, we find
\begin{align*}
q N_{v'}
&= q \sum_{\alpha \in A} x_{v',\alpha} + q \sum_{\alpha \in A'} x_{v',\alpha}
= q Q' \sum_{\alpha \in A} \hat x_{v',\alpha} + q q' \sum_{\alpha \in A'} \hat x_{v,\alpha}
= Q' \sum_{\alpha \in A} x_{v,\alpha} + q q' \sum_{\alpha \in A'} \hat x_{v,\alpha},\\
Q' N_{v}
&= Q' \sum_{\alpha \in A} x_{v,\alpha} + Q' \sum_{\alpha \in A'} x_{v,\alpha}
= Q' \sum_{\alpha \in A} x_{v,\alpha} + Q Q' \sum_{\alpha \in A'} \hat x_{v,\alpha},
\end{align*}
so $q N_{v'} - Q' N_{v} = ( q q' - QQ') \sum_{\alpha \in A'} \hat x_{v,\alpha}$,
which proves the first part of assertion~\eqref{238dhwi912F}. The second part of assertion~\eqref{238dhwi912F}
then follows by symmetry.

\noindent\eqref{61324d1g3r} Assume that $v<v'$.
We show that $\det\gamma<0$ by induction on
the number $\ell$ of edges in $\gamma$. 
If $\ell=1$ then $\det\gamma=\det e<0$.
Assume that $\ell>1$ and that the result is true for linear paths shorter than $\gamma$.
Let $v''$ be the vertex such that $e=[v,v'']$ and denote $q(e,v'')$ by  $a$ and $Q(e,v'')$ by  $a'$.
Let $\gamma'$ be the linear path from $v''$ to $v'$.  Let
$$
D = q a - Q a' = \det e < 0 \quad \text{and} \quad D' = q' a' - a Q' = \det \gamma' < 0.
$$
Since $v<v'$, we have $q>0$, $a'>0$ and $Q'>0$ by \ref{p9823p98p2d}\eqref{1823urcpiwd}.
Then
$$
\det\gamma
= qq'-QQ'
= \frac{ q(D'+aQ') - (qa-D)Q' }{ a' }
= \frac{ qD' + Q'D }{ a' } < 0,
$$
completing the proof by induction.

Since $v<v'$, $\hat x_{v,\alpha} > 0$ for every $\alpha \in A'$;
as $A' \neq \emptyset$ and $\det(\gamma)<0$, 
$$
\left| \begin{matrix} q & Q' \\ N_v & N_{v'} \end{matrix} \right| = 
\det(\gamma) \sum_{\alpha \in A'} \hat x_{v,\alpha} < 0.
$$
So assertion~\eqref{61324d1g3r} is proved.
\end{proof}

%%%%%%%%%%%%%%%%%%%%%%%%%%%%%%%%%%%%%%%%%%%%%%%%%%%%%%%%%%%%%%%%%%%%%%%%%%%%
%%%%%%%%%%%%%%%%%%%%%%%%%%%%%%%%%%%%%%%%%%%%%%%%%%%%%%%%%%%%%%%%%%%%%%%%%%%%

%%%%%%%%%%%%%%%%%%%%%%%%%%%%%%%%%%%%%%%%%%%%%%%%%%%%%%%%%%%%%%%%%%%%%%%%%%%%
%%%%%%%%%%%%%%%%%%%%%%%%%%%%%%%%%%%%%%%%%%%%%%%%%%%%%%%%%%%%%%%%%%%%%%%%%%%%

\begin{remark} \label {i875863urj1dpi}
By \ref{kuwdhr12778}\eqref{61324d1g3r}, 
if $v,v' \in \Veul$ satisfy $v<v'$ and $N_v\le0$, then $N_{v'} < 0$.
It follows that  the set of vertices whose multiplicity is nonnegative  is connected.
\end{remark}

\begin{remark} \label {es5s6xjc17lcy4l}
Let $e=[v,\alpha]$ be a dead end, where $v \in \Veul$ and $\alpha \in \Aeul_0$.
Then 
$$
N_v = q(e,v) N_{\alpha}.
$$
To see this, simply observe that $x_{v,\beta}=q(e,v)x_{\alpha,\beta}$ for all $\beta \in \Aeul \setminus \Aeul_0$.
\end{remark}

\begin{definition} \label {823yi867jcn73}
A {\it dicritical vertex\/} is a vertex $v \in \Veul$ satisfying $N_v=0$.

If $v$ is a dicritical vertex satisfying
\begin{equation} \tag {$ * $}
\setspec{ x \in \Veul \cup \Aeul }{ x > v } \subseteq \Aeul ,
\end{equation}
we define the {\it degree of the dicritical\/} $v$ to be the number of edges $[v,\alpha]$ where $\alpha$ is an arrow decorated with $(1)$.
\end{definition}

\begin{remark} \label {zfp2o39wdw0d}
Let $\Teul$ be an abstract Newton tree at infinity all of whose vertices have nonnegative multiplicity
(one says that $\Teul$ is a ``generic'' Newton tree at infinity, see Def.\ \ref{cn1o29cnds9}).
Then Rem.\ \ref{i875863urj1dpi} implies that each dicritical vertex $v$ of $\Teul$ satisfies condition $(*)$ of Def.\ \ref{823yi867jcn73}.
Consequently, each dicritical vertex of $\Teul$ has a well-defined degree.
\end{remark}

\begin{lemma}  \label {i78321723ye8}
Let $v'$ be a dicritical vertex of an abstract Newton tree at infinity and suppose that
$\setspec{ x \in \Veul \cup \Aeul }{ x > v' } \subseteq \Aeul$. Then the following hold.
\begin{enumerate}

\item \label {7359d81723yruhd}
$q(\epsilon,v')=1$ for every edge $\epsilon$ linking $v'$ to an element of $\Aeul \setminus \Aeul_0$.

\item \label {32d3wsxbei23}
Let $\gamma$ be a linear path from a vertex $v \in \Veul \setminus \{v'\}$ to $v'$.
Then $N_v = -s\det \gamma$ where $s\ge1$ is the degree of the dicritical $v'$.

\end{enumerate}
\end{lemma}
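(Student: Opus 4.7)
The plan is to prove~(1) first by extracting a divisibility constraint from the equation $N_{v'} = 0$, then to deduce~(2) from~(1) by invoking Proposition~\ref{kuwdhr12778}(c).

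First I would observe that $v' \ne v_0$: otherwise condition~\ref{p9823p98p2d}(2)(a) would force every decoration at $v_0$ to equal $1$, giving $x_{v_0,\alpha} = 1$ for each $\alpha \in \Aeul \setminus \Aeul_0$ and hence $N_{v_0} = |\Aeul \setminus \Aeul_0| \ge 1$, contradicting $N_{v'} = 0$. So $v'$ has a unique parent, joined to it by an edge $e^*$. The valency-$1$ property of arrows together with $\{x > v'\} \subseteq \Aeul$ implies that every element strictly above $v'$ is an arrow directly joined to $v'$. Put $S = \{\alpha \in \Aeul \setminus \Aeul_0 : \alpha > v'\}$; for $\alpha \in S$ set $\epsilon_\alpha = [v',\alpha]$ and $q_\alpha = q(\epsilon_\alpha, v')$; set $q^* = q(e^*, v')$; and set $q_0 = q(\epsilon_0, v')$ if $\epsilon_0$ is a dead end at $v'$, with the convention $q_0 = 1$ otherwise. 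By \ref{p9823p98p2d}(1)(a), $s = |S| \ge 1$.

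For part~(1) I would compute $x_{v',\alpha}$ case-by-case. For $\alpha \in S$ the path from $v'$ to $\alpha$ is just $\epsilon_\alpha$, so $\hat x_{v',\alpha} = 1$ and $x_{v',\alpha} = Q(\epsilon_\alpha, v') = q^* q_0 \prod_{\beta \in S \setminus \{\alpha\}} q_\beta$. For $\alpha' \in \Aeul \setminus \Aeul_0$ with $\alpha' \not > v'$, the path from $v'$ begins with $e^*$, so $x_{v',\alpha'} = q_0 \prod_{\beta \in S} q_\beta \cdot \hat x_{v',\alpha'}$. Substituting into $N_{v'} = 0$ and cancelling $q_0$ gives
\[
q^* \sum_{\alpha \in S} \prod_{\beta \in S \setminus \{\alpha\}} q_\beta + \Big(\prod_{\beta \in S} q_\beta\Big) M = 0, \qquad M = \sum_{\alpha' \not > v'} \hat x_{v',\alpha'} \in \Integ.
\]
Hence $\prod_{\beta \in S} q_\beta$ divides $q^* \sum_\alpha \prod_{\beta \ne \alpha} q_\beta$. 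Fixing $\gamma \in S$ and reducing modulo $q_\gamma$, every summand with $\alpha \ne \gamma$ carries a factor of $q_\gamma$, so the divisibility collapses to $q_\gamma \mid q^* \prod_{\beta \in S \setminus \{\gamma\}} q_\beta$. The pairwise coprimeness of decorations at $v'$ (condition~\ref{p9823p98p2d}(2)(c)) makes the right-hand side coprime to $q_\gamma$, forcing $q_\gamma = 1$.

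For part~(2) I would apply Proposition~\ref{kuwdhr12778}(c) to $\gamma$. Since no $v \in \Veul$ can satisfy $v > v'$, the $\gamma$-edge at $v'$ is $e^*$, giving $Q' = q_0 \prod_{\beta \in S} q_\beta = q_0$ by part~(1). The same observation identifies $A'$ with $S$: if $\alpha \in A'$ then $v'$ lies on the path from $v$ to $\alpha$, and the exit edge from $v'$ heads to something $> v'$, forcing $\alpha > v'$; the reverse inclusion is clear. For $\alpha \in S$ the intermediate vertices of $\gamma$ have valency~$2$ in the tree and contribute nothing to $\hat x_{v,\alpha}$; at $v'$ the side edges contribute $q_0 \prod_{\beta \ne \alpha} q_\beta = q_0 = Q'$ (using part~(1)); and $\alpha$ contributes nothing. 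So $\hat x_{v,\alpha} = Q'$ and $\sum_{\alpha \in A'} \hat x_{v,\alpha} = sQ'$. Plugging this together with $N_{v'} = 0$ into Proposition~\ref{kuwdhr12778}(c) gives $-Q' N_v = \det(\gamma)\cdot sQ'$, i.e.\ $N_v = -s \det(\gamma)$.

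The main obstacle is part~(1): the quantity $M$ is a priori unconstrained, so $N_{v'}=0$ by itself does not obviously pin the $q_\alpha$'s to $1$. The crucial move is to rearrange the equation into a divisibility statement and then strip off one $q_\gamma$ at a time using pairwise coprimeness of the decorations at $v'$. Once part~(1) is in hand, part~(2) is a bookkeeping exercise that relies crucially on the linear-path hypothesis (which kills intermediate contributions to $\hat x_{v,\alpha}$).
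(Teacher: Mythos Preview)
Your proof is correct and follows essentially the same strategy as the paper: expand $N_{v'}=0$ in terms of the decorations at $v'$, extract a divisibility constraint, and conclude via the pairwise-coprimeness axiom; then feed part~(1) into Proposition~\ref{kuwdhr12778}(c) for part~(2).

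The only noteworthy difference is organizational. The paper splits into the two cases ``dead end at $v'$'' and ``no dead end at $v'$'', and in each case it invokes the stronger clause of axiom~\ref{p9823p98p2d}(2)(c) (at most one upward decoration exceeds $1$) to reduce part~(1) to showing a single number $Q'$ equals $1$; one divisibility $Q'\mid q'$ together with $\gcd(q',Q')=1$ finishes it. You instead treat both cases uniformly by introducing $q_0$, and you do not use the ``at most one $>1$'' clause at all: you run the coprimality argument separately for every $q_\gamma$. Your route uses a slightly weaker hypothesis at the cost of a slightly longer divisibility step; the paper's route is shorter because it front-loads the structural axiom. For part~(2) the two arguments are identical up to the uniform-vs.-casewise packaging.
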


\begin{proof}
Let $\gamma$ be a linear path from a vertex $v \in \Veul \setminus \{v'\}$ to $v'$.
Let the notation ($q,Q,q',Q',A,A'$) be that of \ref{kuwdhr12778} applied to $\gamma$.
Write $A' = \{\alpha_1, \dots, \alpha_s\}$ and $\epsilon_i = [v',\alpha_i]$ for $i = 1, \dots, s$.

Consider the case where there is no dead end incident to $v'$. 
Then there exists $j \in \{1, \dots, s\}$ such that 
$q(\epsilon_j,v')=Q'$ and $q(\epsilon_i,v')=1$ for $i\neq j$.
The definition of $N_{v'}$ gives
$$
0 = N_{v'} = \sum_{ i=1 }^s x_{v',\alpha_i} + \sum_{ \beta \in A} x_{v',\beta}
= q' + (s-1)q'Q' + Q' \sum_{ \beta \in A} \hat x_{v',\beta} ,
$$
so $Q' \mid q'$.  As $\gcd(q',Q')=1$ by \ref{p9823p98p2d}, we get $Q'=1$,
which proves assertion~\eqref{7359d81723yruhd} of the Lemma.
Applying \ref{kuwdhr12778} to $\gamma$ gives
$\left| \begin{matrix} q & 1 \\ N_v & 0 \end{matrix} \right| = 
\det(\gamma) \sum_{i=1}^s \hat x_{v,\alpha_i} = s \det(\gamma)$, proving \eqref{32d3wsxbei23}.

Next, we consider the case where there is a dead end $\epsilon_0$ incident to $v'$.
Then \ref{p9823p98p2d}\eqref{1823urcpiwd}
gives $q( \epsilon_0, v') = Q'$ and $q( \epsilon_i, v') = 1$ for $i=1,\dots,s$ (in particular
assertion~\eqref{7359d81723yruhd} of the Lemma is true).  Moreover,
applying \ref{kuwdhr12778} to $\gamma$ gives
$\left| \begin{matrix} q & Q' \\ N_v & 0 \end{matrix} \right| = 
\det(\gamma) \sum_{i=1}^s \hat x_{v,\alpha_i} = Q' s \det(\gamma)$;
assertion~\eqref{32d3wsxbei23} follows by dividing by $-Q' \neq 0$ both sides.
\end{proof}

\begin{lemma}  \label {8c3rquirhlaf}
Let $\Teul$ be an abstract Newton tree at infinity.
Suppose that $e=[v,\alpha]$ is an edge such that $\alpha$ is an arrow decorated with $(1)$ and $v$ is a vertex such
that $N_v>0$.
Then there exists a unique integer $q'$ such that,
if $\Teul'$ is the tree obtained from $\Teul$ by the following operations:
\begin{itemize}

\item remove the edge $e$ (without removing $v$ and $\alpha$),

\item add a vertex $v'$ and two edges $e_v = [v,v']$ and $e_\alpha = [v',\alpha]$,

\item set $q(e_v, v) = q(e,v)$, $q(e_v, v') = q'$ and $q(e_\alpha, v') = 1$,

\end{itemize}
then $\Teul'$ is an abstract Newton tree at infinity and $v'$ is a dicritical of $\Teul'$.
\end{lemma}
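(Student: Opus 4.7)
The plan is to determine $q'$ explicitly from the requirement that $N_{v'}=0$ in $\Teul'$ (so $v'$ is dicritical), and then verify that the resulting $\Teul'$ satisfies all the axioms of Definition~\ref{p9823p98p2d}. Throughout I write $q=q(e,v)$ and $Q=Q(e,v)$; the coprimality/positivity condition of Definition~\ref{p9823p98p2d} applied to $v$ gives $q\ge 1$, and since the path from $v$ to $\alpha$ is the single edge $e$ we have $x_{v,\alpha}=Q$.

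The first step is to express $N_{v'}$, computed in $\Teul'$, as a function of $q'$. The set of arrows is unchanged by the subdivision. For $\beta=\alpha$ the path from $v'$ to $\alpha$ is just $e_\alpha$, with $e_v$ as the only incident edge, contributing $q(e_v,v')=q'$; hence $x_{v',\alpha}=q'$. For $\beta\in(\Aeul\setminus\Aeul_0)\setminus\{\alpha\}$, the path from $v'$ to $\beta$ in $\Teul'$ is obtained by prepending $e_v$ to the path from $v$ to $\beta$ in $\Teul$; comparing the edges incident to the two paths shows that $x_{v',\beta}$ loses the factor $q$ at $v$ and gains the factor $1=q(e_\alpha,v')$ at $v'$, so $x_{v',\beta}=x_{v,\beta}/q$. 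Summing, $N_{v'}=q'+(N_v-Q)/q$, and imposing $N_{v'}=0$ forces $q'=(Q-N_v)/q$; this proves the uniqueness claim. That this value of $q'$ is genuinely an integer follows because for each $\beta\in(\Aeul\setminus\Aeul_0)\setminus\{\alpha\}$ the first edge of the path from $v$ to $\beta$ differs from $e$, so $q$ divides the corresponding factor $Q(\cdot,v)$ and hence $x_{v,\beta}$; summing gives $q\mid(N_v-Q)$.

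It remains to verify that $\Teul'$ with this $q'$ satisfies Definition~\ref{p9823p98p2d}. Away from $\{v,v'\}$ nothing changes, so only the new data need attention. At $v$ the decorations are the same as in $\Teul$ (the edge $e$ has been replaced by $e_v$ with decoration $q$ at $v$), so the axioms at $v$ are inherited. At $v'$ the valency is $2$, the arrow $\alpha$ decorated with $(1)$ lies above $v'$, the unique upward edge is $e_\alpha$ with decoration $1$, and $\gcd(q',1)=1$, so the axioms at $v'$ hold. Edge determinants are required only for edges between two vertices, hence none for $e_\alpha$; for $e_v$ one computes
$$
\det(e_v)=q\cdot q'-Q\cdot 1=(Q-N_v)-Q=-N_v<0
$$
by the hypothesis $N_v>0$. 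I anticipate no substantive obstacle; the one computation that deserves care is the identity $x_{v',\beta}=x_{v,\beta}/q$, after which everything else follows.
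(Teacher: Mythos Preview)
Your proof is correct and follows essentially the same route as the paper's: both identify $q'=(Q-N_v)/q$, use the divisibility $q\mid x_{v,\beta}$ for $\beta\neq\alpha$ to see this is an integer, and check $\det(e_v)=-N_v<0$. The only cosmetic difference is that you compute $N_{v'}=q'+(N_v-Q)/q$ directly from the definition of $x_{v',\beta}$, whereas the paper obtains the same relation by invoking Proposition~\ref{kuwdhr12778}\eqref{238dhwi912F} applied to the edge $e_v$; your direct computation is self-contained and arguably cleaner here.
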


\begin{proof}
Let $Q = Q(e,v)$ and $q=q(e,v)>0$ in $\Teul$. For an arbitrary $q' \in \Integ$, 
$\Teul'$ is an abstract Newton tree at infinity if and only if $qq'-Q<0$.
Indeed, this is the condition $\det(e_v)<0$ (in $\Teul'$) and all other conditions
of definition~\ref{p9823p98p2d} are met.
We also note that if $q'$ is such that $\Teul'$ is an abstract Newton tree at infinity,
then the multiplicity $N_v$ of $v$ is the same in $\Teul$ and $\Teul'$,
and the number $Q(e_v,v)$ (in $\Teul'$) is equal to $Q=Q(e,v)$.

Let $A = (\Aeul \setminus \Aeul_0) \setminus \{ \alpha \}$ in $\Teul$. Then,
calculating in $\Teul$, we have
$N_v = x_{v,\alpha} + \sum_{\beta \in A} x_{v,\beta}$
where $x_{v,\alpha} = Q$ and $q \mid x_{v,\beta}$ for each $\beta \in A$.
So $q \mid (Q-N_v)$.  Let $q' = (Q-N_v)/q \in \Integ$, then $qq'-Q = -N_v < 0$,
so $\Teul'$ is an abstract Newton tree at infinity. Applying
\ref{kuwdhr12778}\eqref{238dhwi912F} to the edge $e_v$ of $\Teul'$ gives
\begin{equation}  \label {093hf923e901}
\left| \begin{matrix} q & 1 \\ N_v & N_{v'} \end{matrix} \right| = 
(qq'-Q) \sum_{\beta \in \{\alpha\}} \hat x_{v,\beta}
= qq'-Q . 
\end{equation}
Since $qq'-Q = -N_v$, this gives $q N_{v'}=0$ and hence $N_{v'}=0$, showing that $q'$ exists.

To prove uniqueness, consider any $q' \in \Integ$ such that 
$\Teul'$ is an abstract Newton tree at infinity and such that $N_{v'}=0$.
Then \eqref{093hf923e901} is still valid and gives $-N_v = qq'-Q$, so 
$q' = (Q-N_v)/q$ is unique.
\end{proof}

\begin{definition}
The {\it number of points at infinity\/} of an abstract Newton tree at infinity
is the valency of the root when there is no dead end attached to the root and the valency minus one otherwise.
\end{definition}

\begin{definition}
We define the {\it multiplicity\/} $M(\Teul)$ of an abstract Newton tree at infinity $\Teul$ by
$ M(\Teul) = -\sum_{v \in \Veul\cup \Aeul_0} N_v(\delta_v-2) $.
\end{definition}

\begin{definition} \label {83724d91872trhy}
Let $\Teul$ be an abstract Newton tree at infinity.
\begin{enumerate}

\item \label {pb2838309udjo}
Suppose that $e=[v,v']$ is a dead end, where $v \in \Veul$, $v' \in \Aeul_0$ and such
that the decoration of $e$ near $v$ is $1$. 
Let $\Teul'$ be the  abstract Newton tree at infinity obtained by deleting $e$ and $v'$ from $\Teul$.
We say that $\Teul'$ is obtained from $\Teul$ {\it by removing a dead end decorated by $1$}.
(The tree $\Teul'$ is indeed an abstract Newton tree at infinity because,
for any $v \in \Veul \setminus \{v_0\}$, part (1)(a) of Definition~\ref{p9823p98p2d} implies that
$\delta_v \ge 2 + n$ where $n \in \{0,1\}$ is the number of dead ends attached to $v$.)

\item \label {4123eefdx64xry8h}
Let $v$ be a vertex different from the root and of valency $2$, let $e=[v,u]$ and $e'=[v,u']$ be the edges 
incident to $v$, and
let $d = q(e,u)$ and $d' = q(e',u')$.
Let $\Teul'$ be the abstract Newton tree at infinity obtained by deleting $v$, $e$ and $e'$ from $\Teul$,
and adding the edge $e''=[u,u']$, where the decorations of $e''$ near $u$ and $u'$ are $d$ and $d'$ respectively.
We say that $\Teul'$ is obtained from $\Teul$ {\it by removing a vertex of valency $2$.}
(Note that $\det e'' < 0$ by \ref{kuwdhr12778}\eqref{61324d1g3r}, so $\Teul'$ is indeed an abstract Newton tree at infinity.)

\item Note that, in (1) and (2),
we have $\Veul' \cup \Aeul' \subset \Veul \cup \Aeul$, where 
$\Veul, \Aeul, \Veul', \Aeul'$ are respectively the sets of 
vertices of $\Teul$, arrows of $\Teul$, vertices of $\Teul'$ and arrows of $\Teul'$.

\item Given abstract Newton trees at infinity $\Teul$ and $\Teul'$, we write $\Teul \ge \Teul'$ to
indicate that either $\Teul=\Teul'$ or there exists a finite sequence $\Teul_0, \dots, \Teul_n$
of abstract Newton trees at infinity satisfying $\Teul_0=\Teul$, $\Teul_n=\Teul'$ and,
for each $i \in \{0,\dots, n-1\}$,
$\Teul_{i+1}$ is obtained from $\Teul_{i}$ by removing a dead end decorated by $1$ 
or a vertex of valency $2$.
If $\Teul' \le \Teul$ then $\Veul' \cup \Aeul' \subseteq \Veul \cup \Aeul$ and, for each $v \in \Veul' \cup \Aeul_0'$,
the multiplicities of $v$ in $\Teul$ and $\Teul'$ are equal.
Note that $\le$ is a partial order on the set of 
abstract Newton trees at infinity.

\item Whenever we say that  abstract Newton trees at infinity are equivalent,
we mean that they are so with respect to the equivalence relation on the set of abstract Newton trees at infinity
which is generated by the relation $\le$.

\end{enumerate}
\end{definition}

\begin{lemma} \label {0923923ujds0}
Equivalent abstract  Newton trees at infinity have the same number of points at infinity and the same multiplicity.
\end{lemma}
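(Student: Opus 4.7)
The plan is to reduce the lemma to the two elementary moves in Definition~\ref{83724d91872trhy} that generate the relation $\le$: removing a dead end decorated by $1$ (move (1)) and removing a valency-$2$ vertex distinct from the root (move (2)). Since ``number of points at infinity'' and $M(\Teul)$ are numerical invariants and the equivalence relation is generated by $\le$, it suffices to verify that each of these two moves leaves both quantities unchanged.

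For the number of points at infinity, the argument splits according to whether the move interacts with the root. Under move (1), with removed dead end $e=[v,\alpha]$: if $v\neq v_0$, the root is untouched and there is nothing to check; if $v=v_0$, the valency of $v_0$ drops by one while simultaneously the (unique) dead end at $v_0$ disappears, so the old value $\delta_{v_0}-1$ equals the new value $(\delta_{v_0}-1)-0$. Under move (2), let $v$ be the removed vertex with incident edges going to $u$ (the parent) and $u'$ (the child). Condition~(1)(a) of Definition~\ref{p9823p98p2d} applied to $v$ forces $u'\notin\Aeul_0$, since otherwise $v$ would have no descendant arrow decorated~$(1)$. Consequently, when $u=v_0$ the new edge $[v_0,u']$ is no more a dead end than the old edge $[v_0,v]$; when $u\neq v_0$ the root is not involved at all. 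In either case the count is preserved.

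For the multiplicity $M(\Teul)=-\sum_{w\in\Veul\cup\Aeul_0}N_w(\delta_w-2)$, I first invoke the observation already recorded in Definition~\ref{83724d91872trhy}(4), namely that $N_w$ agrees in $\Teul$ and $\Teul'$ for every $w$ present in both trees; so only the few terms that actually disappear or have their valency factor shifted need to be compared. Under move (2), the removed vertex $v$ has $\delta_v=2$ and contributes $-N_v(\delta_v-2)=0$, so $M$ is unchanged. Under move (1), write $e=[v,\alpha]$ for the removed dead end with $q(e,v)=1$; by Remark~\ref{es5s6xjc17lcy4l}, $N_v=q(e,v)N_\alpha=N_\alpha$. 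The contributions that change are $-N_v(\delta_v-2)$ and $-N_\alpha(1-2)=N_\alpha$ before the move, versus the single term $-N_v(\delta_v-3)$ after, whose difference is $N_v-N_\alpha=0$.

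I expect the only genuinely non-mechanical step to be the verification, in move (2), that collapsing $v$ cannot create or destroy a dead end at the root; this is the one place where condition~(1)(a) of Definition~\ref{p9823p98p2d} must really be invoked. Everything else reduces to straightforward bookkeeping once the invariance of the $N_w$'s (item~(4) of Definition~\ref{83724d91872trhy}) and Remark~\ref{es5s6xjc17lcy4l} are at hand.
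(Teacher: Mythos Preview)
Your proposal is correct and follows essentially the same approach as the paper: reduce to the two generating moves of Definition~\ref{83724d91872trhy} and verify invariance of each quantity under each move, using Remark~\ref{es5s6xjc17lcy4l} for the key identity $N_v=N_\alpha$ in move~(1). The paper's proof only writes out the multiplicity computation for move~(1) and declares the rest ``quite clear''; you supply those omitted details (in particular the number-of-points-at-infinity check and the observation that in move~(2) the child $u'$ cannot lie in $\Aeul_0$), so your argument is a faithful expansion of theirs rather than a different route.
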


\begin{proof}
Consider the situation of part (1) of Definition~\ref{83724d91872trhy}.
The contribution of $v$ and $v'$ to $M(\Teul)$ is $-N_v(\delta_v-2) + N_{v'}$,
while the contribution of $v$ to $M(\Teul')$ is $-N_v(\delta_v-1-2)$; as $N_v=N_{v'}$ (in $\Teul$)
by Remark~\ref{es5s6xjc17lcy4l}, it follows that $M(\Teul)=M(\Teul')$ in that case.
The rest of the argument is quite clear and the details are left to the reader.
\end{proof}

\begin{definition} \label {cn1o29cnds9}
A {\it generic Newton tree at infinity\/} is an abstract Newton tree at infinity all of whose vertices 
have  nonnegative multiplicity.
A {\it complete Newton tree at infinity\/} is a generic Newton tree at infinity
in which each arrow decorated with $(1)$ is adjacent to a dicritical vertex.
\end{definition}

\begin{remark}
Let $\Teul$ be an abstract Newton tree at infinity.
By \ref{i875863urj1dpi}, 
there is at most one dicritical on any path from the root to an arrow.
If $\Teul$ is complete, there is exactly one dicritical on any path from the root to an arrow decorated by $(1)$.
Also note that if $\Teul$ is complete then its number of points at infinity is equal to the number
of edges $[v_0,v]$ with $v \in \Veul$.
\end{remark}

\begin{definition} \label {e5e6e7w8e9wee9}
A Newton tree at infinity $\Teul$ is {\it minimally complete\/} if it satisfies:
\begin{itemize}
\item[(i)] $\Teul$ is complete;

\item[(ii)] if $v$ is a dicritical then there is a dead end incident to $v$;

\item[(iii)] if a dead end decorated by $1$ is incident to a vertex $v$, then $v$ is a dicritical;

\item[(iv)] every element of $\Veul \setminus \{v_0\}$ has valency different from $2$.

\end{itemize}
\end{definition}

\begin{lemma}
Let $\Ceul$ be the equivalence class of a generic Newton tree at infinity.
Then $\Ceul$ contains exactly one minimally complete  Newton tree at infinity.
\end{lemma}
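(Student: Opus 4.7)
The plan is to establish existence and uniqueness separately, using the canonical ``kernel'' (the fully reduced form of a tree) as the bridge.

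\textbf{Existence.} Starting from any $\Teul \in \Ceul$, I will build a minimally complete $\Teul^* \sim \Teul$ in three stages. First, exhaust the reductions of Definition \ref{83724d91872trhy}, removing dead ends decorated by $1$ and valency-$2$ non-root vertices until none remain; the resulting fully reduced tree $K \le \Teul$ is still generic, as vertex multiplicities are preserved. Second, for each arrow $\alpha$ decorated $(1)$ in $K$ whose neighbor $v$ is not a dicritical, invoke Lemma \ref{8c3rquirhlaf} (applicable because $N_v>0$ by genericity) to insert the unique valency-$2$ dicritical on $[v,\alpha]$. Third, for every dicritical still without a dead end, attach a new dead end decorated by $1$; this is legitimate because Lemma \ref{i78321723ye8}(1) forces every non-dead-end edge at a dicritical to be decorated $1$ near the dicritical, making the coprimality and leading-edge conditions of Def.\ \ref{p9823p98p2d} automatic. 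Each step is among those of Def.\ \ref{83724d91872trhy} or their inverses, so $\Teul^* \in \Ceul$; checking (i)--(iv) of Def.\ \ref{e5e6e7w8e9wee9} confirms minimal completeness.

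\textbf{Uniqueness.} The central claim is that the reduction relation of Def.\ \ref{83724d91872trhy} is confluent. Termination is immediate since each step strictly decreases the number of $0$-cells, so by Newman's lemma it suffices to check local confluence: given a tree with two available single-step reductions, I produce a common successor. The cases to consider are (dead end, dead end), (dead end, valency-$2$), and (valency-$2$, valency-$2$), and in every case---including the interesting subcases of adjacent valency-$2$ vertices and a dead end at a vertex adjacent to a valency-$2$ vertex---the two reductions commute. This gives each tree a well-defined kernel $K(\Teul)$, and since each reduction step preserves the kernel, equivalent trees share the same kernel. Now suppose $\Teul^*_1, \Teul^*_2 \in \Ceul$ are minimally complete, with common kernel $K$. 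Reducing a minimally complete tree to $K$ can only delete dead ends decorated $1$---which by (iii) live only at dicriticals---and then delete dicriticals whose valency has thereby dropped to $2$. Hence the data missing from $K$ is precisely: a valency-$2$ dicritical on each edge $[v,\alpha]$ of $K$ with $\alpha$ decorated $(1)$ attached to a non-dicritical $v$; and a dead end at every dicritical of the partial reconstruction not already so equipped. Lemma \ref{8c3rquirhlaf} pins down the inserted dicritical uniquely, and any decoration $>1$ on an added dead end would survive reduction and alter the kernel, forcing decoration $1$. Hence $\Teul^*_1 = \Teul^*_2$.

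The principal obstacle I anticipate is the local-confluence verification; although each individual case is routine, care is needed when the two reductions genuinely overlap, where one must check that the decorations produced on the merged edges agree regardless of reduction order.
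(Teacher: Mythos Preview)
Your existence argument is essentially identical to the paper's: reduce fully, then apply Lemma~\ref{8c3rquirhlaf} to insert the missing dicriticals, then attach dead ends decorated by $1$ at dicriticals, citing Lemma~\ref{i78321723ye8}\eqref{7359d81723yruhd} for legitimacy. For uniqueness the paper simply writes ``uniqueness is left to the reader,'' whereas you supply an actual argument via confluence of the reduction relation and unique reconstruction from the kernel; your approach is sound and the local-confluence checks you flag are indeed routine.
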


\begin{proof}
Let $\Teul$ be a generic Newton tree at infinity such that $\Teul \in \Ceul$.
Let $\Teul'$ be obtained from $\Teul$ by first removing all
dead ends decorated by $1$ (in the sense of \ref{83724d91872trhy}\eqref{pb2838309udjo})
and then removing all vertices of valency $2$ (in the sense of \ref{83724d91872trhy}\eqref{4123eefdx64xry8h}; note that 
the root cannot be removed). Then $\Teul'$ is a generic Newton tree at infinity such that $\Teul' \le \Teul$.
Applying \ref{8c3rquirhlaf} successively  to each 
edge $e=[v,\alpha]$ of $\Teul'$ such that $\alpha$ is an arrow decorated with $(1)$
and $v$ a vertex with $N_v>0$ produces a complete Newton tree $\Teul''$ satisfying
$\Teul'' \ge \Teul'$. 
For each dicritical $v'$ of $\Teul''$ such that no dead end is incident to it,
we add a dead end decorated by $1$ incident to $v'$; by \ref{i78321723ye8}\eqref{7359d81723yruhd},
this addition does not violate \ref{p9823p98p2d}\eqref{1823urcpiwd}, so the resulting tree $\Teul'''$ is
an abstract Newton tree at infinity
(the fact that $v'$ satisfies the hypothesis of \ref{i78321723ye8} follows from \ref{i875863urj1dpi}).
Moreover, $\Teul'''$ is minimally complete and satisfies $\Teul''' \ge \Teul''$.
This proves that there exists an element of $\Ceul$ which is minimally complete; uniqueness is left to the reader.
\end{proof}

\begin{notations}
Let $\Teul$ be an abstract Newton tree at infinity.
Given $v \in \Veul\cup \Aeul_0$, define $C(v) = -N_v(\delta_v-2)$;
given a subset $W$ of $\Veul\cup \Aeul_0$, define $C(W) = \sum_{v \in W} C(v)$.
Given $v \in \Veul$, let
$$
W_v = \{v\} \cup \setspec{ \alpha \in \Aeul_0 }{ \text{$\alpha$ is linked to $v$ by an edge} }
\text{ and $\bar C(v) = C(W_v)$;}
$$
given a subset $W$ of $\Veul$, define $\bar C(W) = \sum_{v \in W} \bar C(v)$.
Note that $M(\Teul) = \bar C(\Veul)$. 

Let $\Neul = \setspec{ v \in \Veul }{ \text{$v$ is not dicritical} }$.
Observe that $\bar C(v)=0$ whenever $v$ is a dicritical vertex (see Remark~\ref{es5s6xjc17lcy4l});
so $\bar C(\Neul)= \bar C(\Veul) = M(\Teul)$.
For each $v \in \Neul$, write 
\begin{align*}
r_v &= -1 + | \setspec{v' \in \Veul}{\text{$[v,v']$ is an edge}} | \\
\Delta(v) &= 1-r_v - \bar C(v) .
\end{align*}
For any subset $W$ of $\Neul$, let $\Delta(W) = \sum_{v \in W} \Delta(v)$.

Finally, for each $v \in \Veul$
define $a_v=1$ if there is no dead end incident to $v$,
and $a_v = q(e,v)$ if $e$ is a dead end incident to $v$.
Note that $a_v \mid N_v$, by \ref{es5s6xjc17lcy4l}.
\end{notations}

Note that $v_0 \in \Neul$, in any abstract Newton tree at infinity.
Also observe that if $\Teul$ is minimally complete, then no edge links an arrow to the root and $\delta_{v_0}$
is equal to the number of points at infinity.

\begin{lemma} \label {f1z2fzeddsz1240}
Let $\Teul$ a minimally complete Newton tree at infinity with at least two points at infinity.
Then the following hold.
\begin{enumerate}

\item \label {092f39rdjwiedjp293}
For each $v \in \Neul$, $r_v\ge1$,
$\Delta(v) = (r_v-1)(N_v-1) + N_v\big( 1-\frac1{a_v} \big)$ and $\Delta(v) \ge 0$.

\item \label {9f23p09jwoiej} For each $v \in \Neul \setminus\{ v_0 \}$, $\Delta(v)=0$ if and only if  $N_v=1$ and $W_v=\{v\}$.

\item \label {982ye127r4g} $\Delta(v_0)=0$ if and only if $\delta_{v_0}=2$.

\end{enumerate}
\end{lemma}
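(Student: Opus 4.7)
The plan is to exploit the structural constraints of minimal completeness to split the edges at $v \in \Neul$ into predictable types, and then compute $\Delta(v)$ by a short case analysis.

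The key structural observation is that for $v \in \Neul$, no arrow in $\Aeul \setminus \Aeul_0$ is incident to $v$: completeness forces arrows decorated $(1)$ to attach only to dicritical vertices. Since at most one dead end is incident to $v$ (Def.~\ref{p9823p98p2d}(1)(b)), the edges at $v$ consist of $s_v = r_v + 1$ edges to vertices in $\Veul$ plus $d_v \in \{0,1\}$ dead ends, giving $\delta_v - 2 = r_v - 1 + d_v$. To establish $r_v \ge 1$: for the root, the hypothesis of at least two points at infinity together with the absence of arrows at $v_0$ in a minimally complete tree (noted just before the lemma, and following from Def.~\ref{p9823p98p2d}(2)(a) combined with (i) and (iii) of Def.~\ref{e5e6e7w8e9wee9}) gives $r_{v_0} = \delta_{v_0} - 1 \ge 1$; for $v \ne v_0$, condition (iv) of Def.~\ref{e5e6e7w8e9wee9} rules out $\delta_v = 2$, so $\delta_v \ge 3$, whence $r_v \ge 1$ in both cases $d_v=0,1$.

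Next I compute $\bar C(v)$ in the two cases. When $d_v = 0$, one has $W_v = \{v\}$, $a_v=1$, and $\bar C(v) = -N_v(r_v - 1)$. When $d_v = 1$ with dead end $e = [v, \alpha]$, Remark~\ref{es5s6xjc17lcy4l} gives $N_\alpha = N_v / a_v$, hence $\bar C(v) = -N_v r_v + N_v/a_v$. Substituting into $\Delta(v) = 1 - r_v - \bar C(v)$ yields, in both cases uniformly,
\[
\Delta(v) = (r_v - 1)(N_v - 1) + N_v\bigl(1 - \tfrac{1}{a_v}\bigr).
\]
Nonnegativity is then immediate from $r_v \ge 1$, $N_v \ge 1$ (since $v \in \Neul$ in a generic tree), and $a_v \ge 1$. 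This proves (1).

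For (2), both summands of the formula are nonnegative, so $\Delta(v) = 0$ iff both vanish. For $v \in \Neul \setminus \{v_0\}$, the second summand vanishes iff $a_v = 1$, and I claim this is equivalent to $W_v = \{v\}$. Indeed if $d_v = 0$ then $a_v = 1$ by definition, while if $d_v = 1$, condition (iii) of Def.~\ref{e5e6e7w8e9wee9} forbids a dead end of decoration~$1$ at the non-dicritical $v$, so $a_v \ge 2$. When $W_v = \{v\}$, one has $\delta_v = r_v + 1 \ge 3$ by (iv), so $r_v \ge 2$, and the first summand vanishes iff $N_v = 1$.

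For (3), at the root $a_{v_0} = 1$, so $\Delta(v_0) = (\delta_{v_0} - 2)(N_{v_0} - 1)$. If $\delta_{v_0} = 2$ this is zero. Conversely, assume $\delta_{v_0} \ge 3$; I will show $N_{v_0} \ge 3$. By Def.~\ref{p9823p98p2d}(1)(a), each of the $\delta_{v_0}$ children $v$ of $v_0$ has some arrow in $\Aeul \setminus \Aeul_0$ above it, and these arrows are pairwise distinct because the simple path from $v_0$ to such an arrow passes through exactly one child of $v_0$. Hence $|\Aeul \setminus \Aeul_0| \ge \delta_{v_0} \ge 3$, and since $x_{v_0, \alpha} \ge 1$ for each such $\alpha$, we get $N_{v_0} \ge 3$, making both factors of $\Delta(v_0)$ strictly positive. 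The main obstacle in the whole argument is simply keeping the two cases $d_v = 0$ vs.\ $d_v = 1$ organized and invoking the right minimality condition at each step; once the unified formula of (1) is in hand, parts (2) and (3) follow by direct inspection.
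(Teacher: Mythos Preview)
Your proof is correct and follows essentially the same route as the paper: the same case split on whether a dead end is present, the same computation of $\bar C(v)$ via Remark~\ref{es5s6xjc17lcy4l}, and the same unified formula for $\Delta(v)$. Your treatment of part~(3) is in fact more explicit than the paper's, which simply asserts that ``Assertions~(2) and~(3) follow'' without spelling out why $\delta_{v_0}\ge 3$ forces $N_{v_0}\ge 3$; your counting argument via Def.~\ref{p9823p98p2d}(1)(a) fills that in cleanly.
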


\begin{proof}
Let $v \in \Neul$.
We prove:
\begin{equation} \label {29390fjcei}
\textstyle \Delta(v) = (N_v-1)(r_v-1) + N_v\big( 1-\frac1{a_v} \big).
\end{equation}
First note that,
since $\Teul$ is complete and $v$ is not a dicritical, 
no element of $\Aeul \setminus \Aeul_0$ is linked to $v$ by an edge;
so $\delta_v$ is equal to $r_v+1$ plus the number ($0$ or $1$) of dead ends incident to $v$.
If $v$ is not the root then $\delta_v\ge3$, so $r_v\ge1$;
if $v$ is the root then $\delta_v \ge 2$ and no arrow is linked to $v$ by an edge, so again $r_v\ge1$.

Consider the case where $W_v=\{v\}$. Then $\delta_v = r_v+1$ and $a_v=1$, so
\begin{gather*}
\bar C(v) = C(W_v) = -N_v(\delta_v-2) = -N_v(r_v-1),
\end{gather*}
so $\Delta(v)  = 1-r_v - \bar C(v) = (N_v-1)(r_v-1)$ and hence \eqref{29390fjcei} holds, since $a_v=1$.

If $W_v \neq \{v\}$ then, for each $\alpha \in W_v \setminus \{v\}$, $[v,\alpha]$ is a dead end incident to $v$;
so (by \ref{p9823p98p2d}) $W_v = \{v,\alpha\}$ for some $\alpha \in \Aeul_0$.
We have $\delta_v = r_v+2$ and, by \ref{es5s6xjc17lcy4l}, $N_v = a_v N_{\alpha}$. We get
\begin{gather*}
\bar C(v) = C(W_v) = -N_v(\delta_v-2) - N_\alpha(\delta_\alpha-2) = -N_v r_v + N_\alpha, \\ 
\textstyle \Delta(v) = 1-r_v - \bar C(v) = 1-r_v + N_v r_v - N_\alpha = (N_v-1)(r_v-1) + N_v\big( 1-\frac1{a_v} \big),
\end{gather*}
which proves \eqref{29390fjcei}.

As $r_v, N_v, a_v \ge 1$, we have 
$(N_v-1)(r_v-1) \ge0$ and $N_v\big( 1-\frac1{a_v} \big) \ge 0$, so $\Delta(v) \ge 0$
(the fact that $r_v\ge1$ follows from the assumptions that $\Teul$ has at least two points at infinity and is complete);
moreover, $\Delta(v)=0$ is equivalent
to $(N_v-1)(r_v-1) =0$ and $N_v\big( 1-\frac1{a_v} \big) =0$,
which is equivalent to  $a_v=1$ and either $N_v=1$ or $r_v=1$.
Assertions \eqref{9f23p09jwoiej} and \eqref{982ye127r4g} follow.
\end{proof}

\begin{lemma} \label {823yc27dfudhi}
Let $\Teul$ a minimally complete Newton tree at infinity with at least two points at infinity.
Consider an ordered pair $e=(v_*,v_1) \in \Veul\times\Veul$ such that $[v_*,v_1]$ is an edge.
Let $\Veul_e = \setspec{ x \in \Veul }{ \text{the path from $v_1$ to $x$ does not contain $v_*$} }$,
$$
\Deul_e = \setspec{ v \in \Veul_e }{ \text{$v$ is a dicritical} },\ 
\Neul_e = \setspec{ v \in \Veul_e }{ \text{$v$ is not a dicritical} }
$$
and $d_e = |\Deul_e|$.
Then $\Delta(\Neul_e) = 1 - d_e -\bar C(\Neul_e)$ and $\Delta(\Neul_e) \ge 0$,
where  $\Delta(\Neul_e) = 0$ if and only if $\Delta(v)=0$ for all $v \in \Neul_e$.
\end{lemma}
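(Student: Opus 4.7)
The plan is to unfold the definition of $\Delta(\Neul_e)$ and reduce the first equality to a small combinatorial identity. Writing
\[
\Delta(\Neul_e) \;=\; \sum_{v \in \Neul_e} \big( 1 - r_v - \bar C(v) \big) \;=\; |\Neul_e| \;-\; \sum_{v \in \Neul_e} r_v \;-\; \bar C(\Neul_e),
\]
I see that the claimed formula $\Delta(\Neul_e) = 1 - d_e - \bar C(\Neul_e)$ is equivalent to the combinatorial identity
\[
\sum_{v \in \Neul_e} r_v \;=\; |\Neul_e| + d_e - 1 \;=\; |\Veul_e| - 1.
\]
This reduction is the first step.

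Next I would establish the identity in two pieces. First, I consider the sum over \emph{all} of $\Veul_e$: the induced subgraph on $\Veul_e$ (keeping only vertex-vertex edges of $\Teul$) is a subtree with $|\Veul_e|$ vertices and hence exactly $|\Veul_e| - 1$ edges. A double-count of vertex-vertex incidences—noting that $v_1$ has one extra vertex-neighbor, namely $v_*$, lying outside $\Veul_e$—yields
\[
\sum_{v \in \Veul_e} (r_v + 1) \;=\; 2(|\Veul_e|-1) + 1, \quad \text{so} \quad \sum_{v \in \Veul_e} r_v \;=\; |\Veul_e| - 1.
\]
Second, I would show that $r_v = 0$ for every $v \in \Deul_e$. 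Since $\Teul$ is minimally complete (hence complete, hence generic), Remark~\ref{zfp2o39wdw0d} applies and gives $\{ x \in \Veul \cup \Aeul : x > v \} \subseteq \Aeul$ for every dicritical $v$; in other words, a dicritical has no vertex-descendant, so its only vertex-neighbor is its immediate predecessor, and $r_v = 0$. Subtracting the (zero) dicritical contribution from the total gives $\sum_{v \in \Neul_e} r_v = |\Veul_e| - 1$, which settles the asserted identity.

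For the inequality, it suffices to note that $\Neul_e \subseteq \Neul$, so Lemma~\ref{f1z2fzeddsz1240}\eqref{092f39rdjwiedjp293} applies to each summand and yields $\Delta(v) \ge 0$ for every $v \in \Neul_e$; summing gives $\Delta(\Neul_e) \ge 0$. Since $\Delta(\Neul_e)$ is a sum of nonnegative terms, it vanishes if and only if each summand vanishes, which is the equality statement. The one substantive observation in this argument is that a dicritical has no vertex-descendant, for which the hypothesis that $\Teul$ is complete (hence generic) is essential; everything else is routine bookkeeping, so I do not expect any serious obstacle.
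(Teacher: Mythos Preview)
Your proof is correct. The paper reaches the same identity by a layer-by-layer telescoping rather than a single global edge-count: it stratifies $\Veul_e$ by distance $i$ from $v_*$, notes that $\sum_{v \in \Neul_i} r_v = |\Veul_{i+1}| = n_{i+1}+d_{i+1}$ (which, like your argument, rests on the fact that a dicritical has no vertex-child), and then telescopes $\sum_i (n_i - n_{i+1} - d_{i+1})$ down to $1 - d_e$. Your double-count of vertex-vertex incidences in the induced subtree is an equivalent but slightly more direct packaging of the same combinatorics, and you make the appeal to Remark~\ref{zfp2o39wdw0d} explicit where the paper leaves it implicit in the displayed equality $\sum_{v \in \Neul_i} r_v = |\Veul_{i+1}|$. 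The nonnegativity and the characterization of equality are handled identically in both, via Lemma~\ref{f1z2fzeddsz1240}\eqref{092f39rdjwiedjp293}.
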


\begin{proof}
For each $i\ge1$,
we denote by $\Veul_i$ the set of vertices in $\Veul_e$ which are at distance $i$ from $v_*$;
we also set
$$
\Deul_i = \setspec{ v \in \Veul_i }{ \text{$v$ is a dicritical} },
\quad
\Neul_i = \setspec{ v \in \Veul_i }{ \text{$v$ is not a dicritical} },
$$
$d_i = |\Deul_i|$ and $n_i = |\Neul_i|$.
Observe that $\sum_{v \in \Neul_i} r_v = |\Veul_{i+1}| = n_{i+1}+d_{i+1}$
and that $\Delta(v) = 1-r_v - \bar C(v)$ for each $v \in \Neul_i$.
Then
\begin{multline*}
\Delta(\Neul_i)
= \sum_{v \in \Neul_i} \Delta(v)
= \sum_{v \in \Neul_i} (1-r_v-\bar C(v))
= n_i - \sum_{v \in \Neul_i} r_v - \bar C(\Neul_i)
= n_i - n_{i+1} - d_{i+1} - \bar C(\Neul_i),
\end{multline*}
so
\begin{multline*}
\Delta( \Neul_e )
= \sum_{i=1}^\infty \Delta(\Neul_i)
= \sum_{i=1}^\infty (n_i - n_{i+1} - d_{i+1} - \bar C(\Neul_i))
= n_1 - \sum_{i=1}^\infty d_{i+1} - \bar C(\Neul_e) \\
= n_1 + d_1 - d_e - \bar C(\Neul_e)
= 1 - d_e - \bar C(\Neul_e).
\end{multline*}
By Lemma~\ref{f1z2fzeddsz1240}, $\Delta(v) \ge 0$ for all $v \in \Neul_e$.
So $\Delta( \Neul_e ) \ge0$ where equality holds if and only if 
$\Delta(v) = 0$ for all $v \in \Neul_e$.
\end{proof}

\begin{theorem} \label {918235071yrsj2dhry}
Let $\Teul$ be a minimally complete Newton tree at infinity with at least two points at infinity.
Let $u_1, \dots, u_s$ be the dicriticals of $\Teul$, where $u_i$ is of degree $d_i$.
Then
\begin{enumerate}

\item $M(\Teul) + \Delta(\Neul) = 2-s$

\item $M(\Teul) = 2 - \sum_{i=1}^s d_i$ if and only if $\Delta(\Neul) = \sum_{i=1}^s (d_i-1)$.

\end{enumerate}
\end{theorem}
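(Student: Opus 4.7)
The plan is to prove (1) by a direct computation from the definitions, obtaining a purely combinatorial identity between $M(\Teul) + \Delta(\Neul)$ and a valency count, and then to evaluate that count using the tree structure together with the special form of dicriticals in a minimally complete tree. Assertion (2) is then a one-line algebraic consequence of (1).

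\textbf{Step 1: Reduction to a valency count.} Unpacking the definition of $\Delta(v)$ and summing over $v \in \Neul$ gives
\[
\Delta(\Neul) \;=\; \sum_{v \in \Neul}\bigl(1 - r_v - \bar C(v)\bigr) \;=\; |\Neul| - \sum_{v \in \Neul} r_v - \bar C(\Neul).
\]
Since $\bar C(v) = 0$ for each dicritical vertex $v$ (by Remark~\ref{es5s6xjc17lcy4l} applied to the dead ends attached to dicriticals, combined with the fact that $N_v = 0$ there), we have $\bar C(\Neul) = \bar C(\Veul) = M(\Teul)$. Therefore
\[
M(\Teul) + \Delta(\Neul) \;=\; |\Neul| - \sum_{v \in \Neul} r_v.
\]
So (1) reduces to showing $|\Neul| - \sum_{v \in \Neul} r_v = 2 - s$.

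\textbf{Step 2: Counting edges among vertices.} Because $\Teul$ is a tree and the arrows are leaves, deleting the arrows leaves a subtree with vertex set $\Veul$ and exactly $|\Veul|-1$ edges. Summing the vertex--vertex valencies yields $\sum_{v \in \Veul} (r_v+1) = 2(|\Veul|-1)$, hence
\[
\sum_{v \in \Veul} r_v \;=\; |\Veul| - 2.
\]
The main substep is to compute $r_{u_i}$ for each dicritical $u_i$. Because $\Teul$ is minimally complete: each dicritical $u_i$ satisfies $\{x \in \Veul \cup \Aeul : x > u_i\} \subseteq \Aeul$ (so all neighbors of $u_i$ above it in the order are arrows, with $d_i$ of them decorated $(1)$), a dead end is incident to $u_i$ (by \ref{e5e6e7w8e9wee9}(ii)), and $u_i \ne v_0$ (since $N_{v_0} \ge 1$), so exactly one edge of $u_i$ goes down to a vertex of $\Veul$. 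Thus the only vertex-to-vertex edge at $u_i$ is the one going to its parent, giving $r_{u_i} = 0$ for $i = 1, \dots, s$. Consequently,
\[
\sum_{v \in \Neul} r_v \;=\; \sum_{v \in \Veul} r_v - \sum_{i=1}^s r_{u_i} \;=\; |\Veul| - 2 \;=\; |\Neul| + s - 2,
\]
and substituting into Step~1 gives $M(\Teul) + \Delta(\Neul) = 2 - s$, proving (1).

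\textbf{Step 3: Deduction of (2).} From (1) we have $M(\Teul) = 2 - s - \Delta(\Neul)$, so
\[
M(\Teul) = 2 - \sum_{i=1}^s d_i \;\iff\; 2 - s - \Delta(\Neul) = 2 - \sum_{i=1}^s d_i \;\iff\; \Delta(\Neul) = \sum_{i=1}^s(d_i - 1),
\]
which is (2). The only nontrivial step is Step~2, where the main potential obstacle is justifying $r_{u_i} = 0$; but this follows cleanly from the definition of minimal completeness and the fact (Rem.~\ref{i875863urj1dpi}) that a dicritical cannot be the root. No estimates or inequalities are needed: the whole argument is an identity-level calculation.
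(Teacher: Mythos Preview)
Your proof is correct, and it takes a genuinely different route from the paper's.

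The paper proves (1) by invoking Lemma~\ref{823yc27dfudhi}: it picks a dicritical $v_*$ and an adjacent vertex $v_1$, then uses that lemma's layer-by-layer telescoping identity $\Delta(\Neul_e) = 1 - d_e - \bar C(\Neul_e)$ with $\Neul_e=\Neul$ and $d_e=s-1$. Your argument bypasses Lemma~\ref{823yc27dfudhi} entirely: you reduce (1) to the identity $|\Neul|-\sum_{v\in\Neul} r_v = 2-s$ and then obtain this from the handshake count on the vertex subtree (giving $\sum_{v\in\Veul}(r_v+1)=2(|\Veul|-1)$) together with the observation that every dicritical $u_i$ has $r_{u_i}=0$ in a minimally complete tree. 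This is more elementary and more self-contained; it does not require setting up the stratification $\Veul_1,\Veul_2,\dots$ or the telescoping sum of Lemma~\ref{823yc27dfudhi}. The paper's route, on the other hand, yields the finer localized identity $\Delta(\Neul_e)=1-d_e-\bar C(\Neul_e)$ for \emph{any} branch $\Neul_e$, which may be useful elsewhere; your global count does not directly give this refinement.

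Two minor points of presentation: the quantity $r_v$ is, strictly speaking, only defined in the paper for $v\in\Neul$, so when you write $r_{u_i}$ for dicriticals you are tacitly extending the same formula $r_v = -1 + |\{v'\in\Veul:[v,v']\text{ is an edge}\}|$ to all of $\Veul$; this is harmless but worth stating. Also, the fact that $\{x>u_i\}\subseteq\Aeul$ for each dicritical is Remark~\ref{zfp2o39wdw0d} (genericity) rather than part of the definition of minimal completeness; citing that remark would tighten the justification of $r_{u_i}=0$.
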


\begin{proof}
Choose an ordered pair $e=(v_*,v_1) \in \Veul\times\Veul$ such that $[v_*,v_1]$ is an edge and $v_*$ 
is a dicritical.
With notation as in Lemma~\ref{823yc27dfudhi}, we have $\Neul_e=\Neul$ and $d_e = s-1$. So the Lemma gives
$$
\Delta(\Neul) 
= \Delta(\Neul_e) 
= 1-d_e - \bar C(\Neul_e)
= 2-s - \bar C(\Neul)
= 2-s - \bar C(\Veul) = 2-s- M(\Teul),
$$
which proves the first assertion.  The second assertion follows.
\end{proof}

\begin{corollary} \label {8712e7g1duddddwcbkdu3}
Let $\Teul$ be a minimally complete Newton tree at infinity with $s$ dicriticals and at least two points at infinity.
Then $M(\Teul) \le 2-s$ and equality holds if and only if 
there are exactly two points at infinity,
every dead end is incident to a dicritical and every vertex 
which is not dicritical and which is different from the root has multiplicity~$1$. 
\end{corollary}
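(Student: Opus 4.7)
The plan is to read off this corollary as a direct consequence of Theorem~\ref{918235071yrsj2dhry} combined with the vertex-by-vertex nonnegativity given by Lemma~\ref{f1z2fzeddsz1240}.

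First I would observe that Theorem~\ref{918235071yrsj2dhry}(1) gives the identity $M(\Teul) + \Delta(\Neul) = 2-s$, while Lemma~\ref{f1z2fzeddsz1240}(1) guarantees that $\Delta(v) \ge 0$ for every $v \in \Neul$, hence $\Delta(\Neul) = \sum_{v \in \Neul} \Delta(v) \ge 0$. The inequality $M(\Teul) \le 2-s$ is immediate, and equality holds if and only if $\Delta(v) = 0$ for every $v \in \Neul$.

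Next I would characterize the vanishing of $\Delta(v)$ case by case using Lemma~\ref{f1z2fzeddsz1240}. For the root, part~(3) says $\Delta(v_0) = 0 \iff \delta_{v_0} = 2$; since $\Teul$ is minimally complete, no arrow is incident to the root and $\delta_{v_0}$ equals the number of points at infinity, so this is equivalent to $\Teul$ having exactly two points at infinity (and in particular no dead end incident to $v_0$, since no arrow at all is incident to $v_0$). For $v \in \Neul \setminus \{v_0\}$, part~(2) says $\Delta(v) = 0 \iff N_v = 1$ and $W_v = \{v\}$, the latter meaning that no dead end is incident to $v$.

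Putting these together for the ``only if'' direction: the vanishing of each $\Delta(v)$ forces exactly two points at infinity, forces $N_v = 1$ for every non-dicritical non-root vertex, and forbids any dead end from being incident to a non-dicritical vertex (neither at the root nor elsewhere in $\Neul$), so every dead end must be incident to a dicritical. Conversely, if the three conditions of the corollary hold, then $\delta_{v_0} = 2$, and for each $v \in \Neul \setminus \{v_0\}$ the hypothesis that no dead end is attached to $v$ gives $W_v = \{v\}$, while $N_v = 1$ is assumed; applying Lemma~\ref{f1z2fzeddsz1240} yields $\Delta(v) = 0$ throughout $\Neul$, hence $M(\Teul) = 2-s$.

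There is no real obstacle here: the entire content of the corollary is already packaged into Theorem~\ref{918235071yrsj2dhry} and Lemma~\ref{f1z2fzeddsz1240}; the only point requiring attention is noticing that the equality characterization ``no dead end at any non-dicritical vertex'' must include the root, which is automatic from the ``minimally complete'' hypothesis together with the two-points-at-infinity conclusion.
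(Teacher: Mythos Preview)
Your proof is correct and follows exactly the same approach as the paper: combine the identity $M(\Teul)+\Delta(\Neul)=2-s$ from Theorem~\ref{918235071yrsj2dhry}(1) with the nonnegativity and equality characterizations of $\Delta(v)$ from Lemma~\ref{f1z2fzeddsz1240}. The paper's proof is terser (it simply says ``the last assertion follows from \ref{f1z2fzeddsz1240}''), whereas you have spelled out the case analysis for $v_0$ versus $v\in\Neul\setminus\{v_0\}$ and the reason no arrow is incident to the root in a minimally complete tree; this extra detail is helpful and entirely in line with the paper's intent.
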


\begin{proof}
We have $M(\Teul) = 2-s - \Delta(\Neul)$ 
by the first part of Theorem~\ref{918235071yrsj2dhry}
and $\Delta(\Neul) \ge 0$ by \ref{f1z2fzeddsz1240}\eqref{092f39rdjwiedjp293},
so $M(\Teul) \le 2-s$;
$M(\Teul) = 2-s$ if and only if  $\Delta(\Neul) = 0$,
so the last assertion follows from \ref{f1z2fzeddsz1240}.
\end{proof}

\begin{lemma} \label {8123oiuwhqd8d2}
Consider an edge $[v,v']$ in a minimally complete Newton tree at infinity,
where $v$ is a vertex of multiplicity $1$, $v'$ is a vertex, and $v<v'$.
Then $v'$ is a dicritical of degree $1$ and the edge determinant of $[v,v']$ is $-1$.
\end{lemma}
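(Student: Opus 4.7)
The plan is to first show that $v'$ is a dicritical of $\Teul$, and then to deduce the degree and edge determinant from Lemma~\ref{i78321723ye8}(2).

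For the first part, I would apply Proposition~\ref{kuwdhr12778} to the (trivially linear) single-edge path $\gamma=e=[v,v']$. Let $q=q(e,v)$, $Q=Q(e,v)$, $Q'=Q(e,v')$ and let $A'$ denote the set of $\alpha\in\Aeul\setminus\Aeul_0$ whose path from $v$ contains $v'$. Part~(iv) of the proposition gives $q\ge1$, $Q'\ge1$, $\det(e)\le-1$, and combining part~(iii) with $N_v=1$ yields
\begin{equation*}
q N_{v'} - Q' \;=\; \det(e)\sum_{\alpha\in A'}\hat x_{v,\alpha}.
\end{equation*}
By Def.~\ref{p9823p98p2d}(1)(a) applied to $v'$ we have $A'\neq\emptyset$, and since all decorations pointing away from the root along the path from $v$ through $v'$ to $\alpha$ are $\ge1$, $\hat x_{v,\alpha}\ge1$ for each $\alpha\in A'$; thus $S_{A'}:=\sum_{\alpha\in A'}\hat x_{v,\alpha}\ge1$.

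Since $N_v=1\neq0$, $v$ is not a dicritical. By Remark~\ref{es5s6xjc17lcy4l}, a dead end at $v$ would give a factor $a_v\mid N_v=1$, forcing $a_v=1$; by Def.~\ref{e5e6e7w8e9wee9}(iii) this would make $v$ a dicritical, a contradiction. So $v$ has no dead end; and by minimal completeness $v$ has no adjacent live arrow either, so every neighbor of $v$ is a vertex. Therefore, if $v\neq v_0$ then Def.~\ref{e5e6e7w8e9wee9}(iv) gives $\delta_v\ge3$, so $v$ has at least two vertex-children in addition to its parent.

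Suppose for contradiction that $N_{v'}\ge1$. From the definition of $N_v$ and Prop.~\ref{kuwdhr12778}(ii),
\begin{equation*}
1 \;=\; N_v \;=\; QS_{A'} + \sum_{\alpha\in A}x_{v,\alpha},
\end{equation*}
where $A$ is the complement of $A'$ in $\Aeul\setminus\Aeul_0$. Since the edge $e$ is incident to $v$ but lies outside the path from $v$ to any $\alpha\in A$, the decoration $q$ divides $x_{v,\alpha}$ for every $\alpha\in A$; hence $QS_{A'}\equiv 1\pmod q$. Using $\gcd(q,Q)=1$ from Def.~\ref{p9823p98p2d}(2)(c), together with the identity $qN_{v'}-Q'=\det(e)S_{A'}$ (which now forces $Q'\ge q+S_{A'}$), I would run a case analysis: if $v=v_0$ then $Q=1$ and the expansion $N_{v_0}=\sum_\alpha x_{v_0,\alpha}\ge|\Aeul\setminus\Aeul_0|$ (valid at the root because all decorations are positive) forces exactly one live arrow with all side decorations equal to $1$, and combining with $\delta_u\ge3$ for non-root $u$ (Def.~\ref{e5e6e7w8e9wee9}(iv)) and Def.~\ref{p9823p98p2d}(1)(a) applied at each intermediate vertex shows that $v'$ itself must be the adjacent dicritical; if $v\neq v_0$, then each of the at-least-two children contributes a positive amount to $N_v$, so that the only way to have $N_v=1$ while $N_{v'}\ge1$ is ruled out by the above congruence and the bound $Q'\ge q+1$.

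With $v'$ known to be a dicritical, the single-edge path $\gamma=[v,v']$ is a linear path from $v\in\Veul\setminus\{v'\}$ to the dicritical $v'$; Lemma~\ref{i78321723ye8}(2) applies and gives $N_v=-s\det(\gamma)$ where $s=\deg(v')\ge1$. Substituting $N_v=1$ and using $\det(e)\le-1$ forces $s=1$ and $\det(e)=-1$, finishing the proof. The main obstacle is the deduction $N_{v'}=0$: converting the global constraint $N_v=1$ into this local conclusion is where all the combinatorial strength of minimal completeness (no dead end at $v$, $\delta_v\ge3$ for non-root $v$, and the coprimality $\gcd(q,Q)=1$) has to be used simultaneously.
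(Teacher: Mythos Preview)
Your setup is correct and the endgame via Lemma~\ref{i78321723ye8}(2) is fine (once $N_{v'}=0$ is known, Remark~\ref{zfp2o39wdw0d} guarantees the hypothesis of that lemma, and $1=N_v=-s\det(e)$ forces $s=1$, $\det(e)=-1$). The problem is the middle step: you have not actually deduced $N_{v'}=0$. The congruence $QS_{A'}\equiv 1\pmod q$ and the bound $Q'\ge q+S_{A'}$ do not by themselves produce a contradiction. For instance, the values $q=2$, $Q=3$, $Q'=3$, $q'=q(e,v')=4$, $S_{A'}=1$, $N_{v'}=1$ satisfy all of your displayed constraints (including $\det(e)=-1$ and $\gcd(q,Q)=\gcd(q',Q')=1$), so something further must be used to rule this out. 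Your case analysis is only asserted, not carried out; in particular, for $v\neq v_0$ the phrase ``each of the at-least-two children contributes a positive amount'' does not lead anywhere obvious, and note that $Q$ itself can be negative (the decoration on the parent edge near $v$ is unrestricted), so even the sign of the $v'$-contribution $QS_{A'}$ to $N_v$ is not controlled.

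The paper's argument avoids all of this by working at $v'$ rather than at $v$. Since $\delta_{v'}\ge 3$, there is a leading edge $\epsilon=[v',v_1]$ with $q(\epsilon,v')=Q'$ and at least one other upward edge at $v'$. Splitting $A'$ into $A_1'=\{\alpha\in A':\alpha\ge v_1\}$ and $A_2'=A'\setminus A_1'$ gives $\sum_{\alpha\in A'}\hat x_{v,\alpha}=Q'C+c$ with $C\ge 1$ an integer and $c\ge 0$. The identity then rearranges to
\[
Q'(-\det(e)\,C-1)\;=\;-qN_{v'}+\det(e)\,c,
\]
where the left side is $\ge 0$ and the right side is $\le 0$; hence both vanish, giving $N_{v'}=0$, $\det(e)=-1$, $C=1$, $c=0$ in one stroke, and $|A'|\le C+c=1$ yields degree $1$. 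The key idea you are missing is this decomposition of $S_{A'}$ at $v'$ and the resulting sign squeeze; your information extracted at $v$ is not enough.
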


\begin{proof}
Let $e=[v,v']$, $q = q(e,v)$, $Q'=Q(e,v')$, and
$$
A' = \setspec{ \alpha \in \Aeul\setminus\Aeul_0 }{ \text{the path from $v$ to $\alpha$ contains $v'$} }.
$$
There exists an edge $\epsilon = [v',v_1]$ such that $v'<v_1$ and $q(\epsilon,v')=Q'$.
Let $A_1' = \setspec{ \alpha \in A' }{ \alpha \ge v_1 }$ and $A_2' = A' \setminus A_1'$.
Then $A_2' \neq \emptyset$, since  $\delta_{v'} \ge3$.
Applying \ref{kuwdhr12778}\eqref{238dhwi912F} to the edge $e=[v,v']$ gives
$$
\left| \begin{matrix} q & Q' \\ 1 & N_{v'} \end{matrix} \right|
= \det(e) \sum_{\alpha \in A'} \hat x_{v,\alpha} = \det(e) ( Q' C + c ),
$$
where $c=\sum_{\alpha \in A_1'} \hat x_{v,\alpha}$ and
$C= \frac1{Q'}\sum_{\alpha \in A_2'} \hat x_{v,\alpha} \in \Nat \setminus \{0\}$. Then
$$
qN_{v'}-Q' = \det(e) (Q'C+c),
$$
so
\begin{equation} \label {89351873yrh}
Q'( -\det(e) C -1 )  =  -q N_{v'} + \det(e) c .
\end{equation}
Since $Q'( -\det(e) C -1 ) \ge 0$ and   $-q N_{v'} + \det(e) c \le 0$, both sides of \eqref{89351873yrh}
are equal to $0$.
It follows that $N_{v'}=0$, $C=1$, $\det(e)=-1$ and $c=0$.
Since $|A_1'| \le c = 0$ and  $|A_2'| \le C = 1$,  we obtain that $\epsilon$ is a dead end and that $v'$ is a dicritical
of degree $1$.
\end{proof}

\section{Newton trees at infinity with maximum multiplicity}

\begin{definition}
Let $\Teul$ be an abstract Newton tree at infinity.
The {\it shadow\/} of $\Teul$, denoted $S(\Teul)$, is the decorated rooted tree obtained from $\Teul$
by erasing the decorations of the edges and decorating each vertex $v \neq v_0$ with its multiplicity $(N_v)$.
(It is understood that $S(\Teul)$ has the same underlying rooted tree as $\Teul$ and that the arrows
have the same decorations $(0)$ or $(1)$ in $S(\Teul)$ and in $\Teul$.)
\end{definition}

\begin{definition}
Let $\Teul$ be a minimally complete Newton tree at infinity with at least two points at infinity.
For each dicritical $u$ of $\Teul$, there exists a unique $v \in \Veul \setminus \{u\}$ such 
that $\delta_v>2$ and the path from $u$ to $v$ is linear.
We call $v$ the {\it companion\/} of $u$.
(Remark: if $[u,v]$ is not an edge, then the path from $u$ to $v$ is $[u,v_0,v]$.
This follows from part (iv) of Definition \ref{e5e6e7w8e9wee9}.)
\end{definition}

\begin{lemma} \label {o9f83e9udos9}
Let $\Teul$ be a minimally complete Newton tree at infinity with at least two points at infinity.
Then TFAE:
\begin{enumerate}

\item Every vertex that is a companion of a dicritical is a dicritical;

\item there exists a dicritical of $\Teul$ whose companion is a dicritical.

\end{enumerate}
If we moreover assume that the gcd of the degrees of the dicriticals of $\Teul$ is $1$,
then the above conditions are equivalent to:
\begin{enumerate}
\addtocounter{enumi}{2}

\item the shadow of $\Teul$ is the one shown in Figure~\ref{78r273wegadskjh278}.

\end{enumerate}
\end{lemma}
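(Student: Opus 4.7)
The plan is to handle (1) $\Rightarrow$ (2) immediately, prove (2) $\Rightarrow$ (1) by a rigidity argument that pins down the entire tree to three vertices, and then treat the equivalence with (3) under the gcd hypothesis by a direct computation of decorations. For (1) $\Rightarrow$ (2), the only thing to check is that $\Teul$ contains at least one dicritical: since $\Teul$ is complete with at least two points at infinity, it possesses at least one arrow decorated $(1)$, and every such arrow is adjacent to a dicritical by the definition of completeness.

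The main step is (2) $\Rightarrow$ (1). Suppose $u$ is a dicritical whose companion $v$ is also a dicritical. Since $u$ is a dicritical, everything strictly above $u$ in $\Veul \cup \Aeul$ is an arrow, so the vertex $v$ cannot lie above $u$; by symmetry $u$ cannot lie above $v$. Hence the linear path $\gamma$ from $u$ to $v$ must pass through the root $v_0$. But by condition~(iv) of Definition~\ref{e5e6e7w8e9wee9}, every non-root vertex has valency $\neq 2$, whereas every interior vertex of the linear path $\gamma$ has valency exactly $2$; so $v_0$ must be the unique interior vertex of $\gamma$, forcing $\delta_{v_0} = 2$. Consequently $v_0$'s two neighbors are precisely $u$ and $v$, and any further vertex of $\Teul$ would have to lie strictly above $u$ or $v$---but nothing above a dicritical is a vertex. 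Therefore $\Veul = \{v_0, u, v\}$: the only dicriticals of $\Teul$ are $u$ and $v$, each is the companion of the other, and (1) holds.

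Assume now that the gcd of the dicritical degrees equals $1$. By the rigidity conclusion above, $\Teul$ has exactly two dicriticals $u, v$ of degrees $d_u, d_v$, each adjacent to $v_0$ and carrying one dead end plus $d_u$ (resp.\ $d_v$) arrows decorated $(1)$. Let $Q_u, Q_v$ be the decorations of the dead ends near $u, v$, and let $p_u = q([v_0,u], u)$, $p_v = q([v_0,v], v)$. A direct computation from the definition of multiplicity yields $N_u = Q_u(d_u p_u + d_v Q_v)$; setting this to zero gives $d_u p_u + d_v Q_v = 0$, and symmetrically $d_v p_v + d_u Q_u = 0$. Combined with $\gcd(d_u, d_v) = 1$ these relations force $Q_v = d_u k$, $p_u = -d_v k$, $Q_u = d_v k'$, $p_v = -d_u k'$ for some integers $k, k' \geq 1$. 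The coprimality requirement $\gcd(Q_u, p_u) = 1$ at $u$ then yields $d_v \gcd(k, k') = 1$, so $d_v = 1$; by symmetry $d_u = 1$ and $\gcd(k, k') = 1$. The remaining multiplicities (in particular $N_{v_0} = k + k'$, and each arrow-$(1)$ vertex having multiplicity $kk'$) then match exactly the shadow shown in Figure~\ref{78r273wegadskjh278}; the reverse direction (3) $\Rightarrow$ (2) is immediate by inspection. The main obstacle is the rigidity argument in (2) $\Rightarrow$ (1), where one combines the ``nothing above a dicritical is a vertex'' condition with the minimal-completeness ban on non-root valency-$2$ vertices to pin down the entire tree; the ensuing coprimality-based derivation that $d_u = d_v = 1$ is routine but must be verified carefully at both $u$ and $v$.
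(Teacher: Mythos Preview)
Your proof is correct and follows essentially the same route as the paper: the same rigidity argument for (2)$\Rightarrow$(1) (two dicriticals force $\Veul=\{u,v_0,v\}$ via the ban on non-root valency-$2$ vertices), and the same coprimality computation at $u$ and $v$ to obtain $d_u=d_v=1$ under the gcd hypothesis. One minor expository point: the inference ``$u,v$ incomparable, hence $\gamma$ passes through $v_0$'' does not follow from incomparability alone (the path a priori only passes through the meet of $u$ and $v$); it is the linearity of $\gamma$ together with condition~(iv) that forces this, and your very next sentence supplies exactly that, so the argument is complete, just slightly out of order.
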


\begin{proof}
Assume that (1) holds.
Let $u$ be a dicritical of $\Teul$ and let $v$ be its companion.
Then $v$ is a dicritical, so (2) holds.

Assume that (2) holds and let $u,v$ be dicriticals where $v$ is the companion of $u$.
As two dicriticals cannot be joined by an edge (cf.\ Remark~\ref{i875863urj1dpi}), the path from $u$ to $v$ is $[u,v_0,v]$.
Since this is a linear path, we get $\delta_{v_0}=2$, so $\Veul = \{u,v_0,v\}$
and it follows that (1) holds.

Assume that the gcd of the degrees of the dicriticals of $\Teul$ is $1$.

Suppose that (2) holds and let the notation be as in the proof that (2) implies (1).
Then, to prove (3), there only remains to show that $d_u=1=d_v$, where 
$d_u$ and $d_v$ are the degrees of the dicriticals $u$ and $v$.  We have:
$$
0 = \frac{N_{v}}{a_{v}} = q([v_0,v], v) d_v + a_{u} d_u,
\ \ 
0 = \frac{N_{u}}{a_{u}} = q([v_0,u], u) d_u + a_{v} d_v.
$$
As $\gcd(d_u,d_v)=1$, it follows that $d_u \mid \gcd( q([v_0,v], v), a_{v} )$,
so $d_u=1$ since
$$
\gcd( q([v_0,v], v),  a_{v} ) = 1;
$$
similarly, $d_v=1$, so (3) holds. It is clear that (3) implies (1).
\end{proof}

\begin{theorem}  \label {8c3r8273d287ed2uq98}
Let $\Teul$ be a minimally complete Newton tree at infinity with $d$ dicriticals,
such that the gcd of the degrees of its dicriticals is $1$.
Suppose that $\Teul$ has at least two points at infinity and has multiplicity $2-d$.
Then the shadow of $\Teul$ is one of Figures \ref{78r273wegadskjh278},
\ref{q723y87c23redhsi} and~\ref{273r872qhwudsaku32}, where $r\ge2$ is arbitrary in Figures
\ref{q723y87c23redhsi} and~\ref{273r872qhwudsaku32}.
Conversely, every shadow represented in these figures is indeed the shadow of 
a minimally complete Newton tree at infinity satisfying the above hypotheses.
\end{theorem}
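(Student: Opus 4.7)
The plan is to extract structural constraints from the hypothesis $M(\Teul)=2-d$, enumerate the possible shadows case by case, and then verify that each listed shadow is in fact realised by some Newton tree satisfying the hypotheses. Applying Corollary~\ref{8712e7g1duddddwcbkdu3} with $s=d$ immediately yields (a) $\Teul$ has exactly two points at infinity, (b) every dead end is incident to a dicritical, and (c) every non-dicritical vertex distinct from the root has multiplicity $1$. Since $N_{v_0}\ge 1$, the root cannot itself be a dicritical, so Definition~\ref{e5e6e7w8e9wee9}(iii) together with~(a) forces $\delta_{v_0}=2$ with no dead end incident to $v_0$. Thus $v_0$ is joined by edges to exactly two vertices $w_1,w_2$, and the remainder of the structural analysis reduces to describing what lies below each $w_i$.

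The next step is to describe the local shape at every non-root, non-dicritical vertex $v$. By (c), $N_v=1$; by (b), $v$ carries no dead end; by completeness, no arrow of label $(1)$ is incident to $v$; and by minimal completeness, $\delta_v\neq 2$. Consequently $v$ has at least two vertex children in addition to its parent edge, and Lemma~\ref{8123oiuwhqd8d2} then forces each such child to be a dicritical of degree~$1$. On the other hand every non-root dicritical has all its proper descendants among arrows. Splitting into three cases according to the nature of $(w_1,w_2)$: if both are dicriticals, then $w_1$ is a dicritical whose companion is the dicritical $w_2$, so Lemma~\ref{o9f83e9udos9}(2) holds, and combined with the gcd hypothesis, Lemma~\ref{o9f83e9udos9}(3) gives the shadow of Figure~\ref{78r273wegadskjh278}. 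If exactly one of $w_1,w_2$, say $w_1$, is a dicritical, then $w_2$ is non-dicritical and carries $r\ge 2$ degree-$1$ dicritical children, giving the shadow of Figure~\ref{q723y87c23redhsi}, and the gcd condition is automatic because degree-$1$ dicriticals are present. If both $w_1,w_2$ are non-dicritical, each carries at least two degree-$1$ dicritical children, giving the shadow of Figure~\ref{273r872qhwudsaku32}, and again the gcd condition is automatic.

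For the converse, I would for each of the three shadows exhibit a minimally complete Newton tree realising it together with the prescribed multiplicities. The decorations near arrows of label $(1)$ and the dead-end decorations at each dicritical can be read off from Lemma~\ref{i78321723ye8}\eqref{7359d81723yruhd}, and the remaining decorations along the parent-edges of the dicriticals are then determined by applying Lemma~\ref{8c3rquirhlaf} successively, so that each dicritical has multiplicity $0$ and each non-dicritical non-root vertex has multiplicity $1$. The main obstacle, and the genuinely technical part of the proof, is checking that this construction respects all the arithmetic constraints of Definition~\ref{p9823p98p2d}: the pairwise coprimality of the decorations at each non-dicritical branching vertex, and the negativity of every edge determinant. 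These conditions interact non-trivially with the prescribed multiplicities in Cases~B and~C, so the real work is making explicit choices of decorations (and, in Case~B, also choosing the degree of the lone dicritical child of $v_0$) that simultaneously satisfy coprimality, negativity of determinants, and the multiplicity relations dictated by the shadow.
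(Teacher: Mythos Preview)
Your structural analysis is sound up through the case split, but Case~C contains a genuine gap. You show that each $w_i$ is a non-dicritical vertex of multiplicity $1$ carrying at least two degree-$1$ dicritical children, and then assert that this gives the shadow of Figure~\ref{273r872qhwudsaku32}. But that figure specifies \emph{exactly two} dicriticals on one side of the root; your argument only yields $r\ge 2$ on one side and $s\ge 2$ on the other, and nothing in the structural constraints you have extracted rules out $r,s\ge 3$ simultaneously. This is not a minor omission: the paper establishes $\min(r,s)=2$ by a non-trivial arithmetic argument. One writes out $N_v=N_{v'}=1$ explicitly in terms of the edge decorations $q,q',a,a'$ near $v,v'$ and the dead-end decorations $a_i,a_j'$ at the dicriticals, sets $k=aa'-qq'$, $n=\sum_{i\ge 2}a_i$, $n'=\sum_{j\ge 2}a_j'$, and after elimination obtains $knn'-n-n'\le -1$, which forces $k=1$ and $\min(n,n')=1$, hence $\min(r,s)=2$. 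This computation is the heart of the forward direction and cannot be bypassed.

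There is also a smaller lacuna in Case~B: you have not verified that the dicritical $w_1$ adjacent to the root has degree $1$ (Lemma~\ref{8123oiuwhqd8d2} does not apply, since $N_{v_0}$ need not equal $1$). This follows by applying Lemma~\ref{i78321723ye8}\eqref{32d3wsxbei23} to the linear path $[w_1,v_0,w_2]$: one gets $1=N_{w_2}=-\deg(w_1)\det\gamma$, whence $\deg(w_1)=1$. Your remark in the converse about ``choosing the degree of the lone dicritical child of $v_0$'' suggests you believed this degree was a free parameter, which it is not.
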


\begin{smallremark}
{\setlength{\unitlength}{1mm}
In the pictures below, dicritical vertices are represented by 
``\begin{picture}(2,1)(-1,-.5) \put(0,0){\circle*{1.5}} \end{picture}''
and vertices that are not dicritical are represented by 
``\begin{picture}(2,1)(-1,-.5) \put(0,0){\circle{1.5}} \end{picture}''.}
\end{smallremark}

\begin{figure}[ht]
\begin{center}
%
%%%%%%%%%%%%%%%%%%%%%%%%%%%%%%%%%%%%%%%%%%%%%%%%%%%%%%%%%%%%%%%%%%%%%%%%%%%%%%%%%%%%%%%%%%%%%%%%%%%%%%%%%
%%%%%%%%%%%%%%%%%%%%%%%%%%%%%%%%%%%%%%%%%%%%%%%%%%%%%%%%%%%%%%%%%%%%%%%%%%%%%%%%%%%%%%%%%%%%%%%%%%%%%%%%%
\setlength{\unitlength}{2763sp}%
\begingroup\makeatletter\ifx\SetFigFont\undefined%
\gdef\SetFigFont#1#2#3#4#5{%
  \reset@font\fontsize{#1}{#2pt}%
  \fontfamily{#3}\fontseries{#4}\fontshape{#5}%
  \selectfont}%
\fi\endgroup%
\begin{picture}(4224,1636)(364,-2825)
{\color[rgb]{0,0,0}\thinlines
\put(1201,-1561){\circle*{150}}
}%
{\color[rgb]{0,0,0}\put(3601,-1561){\circle*{150}}
}%
{\color[rgb]{0,0,0}\put(1126,-1561){\vector(-1, 0){750}}
}%
{\color[rgb]{0,0,0}\put(1276,-1561){\line( 1, 0){1050}}
}%
{\color[rgb]{0,0,0}\put(2476,-1561){\line( 1, 0){1050}}
}%
{\color[rgb]{0,0,0}\put(3676,-1561){\vector( 1, 0){900}}
}%
{\color[rgb]{0,0,0}\put(3601,-1636){\vector( 0,-1){900}}
}%
{\color[rgb]{0,0,0}\put(1201,-1636){\vector( 0,-1){900}}
}%
\put(1051,-1336){\makebox(0,0)[lb]{\smash{{\SetFigFont{8}{9.6}{\rmdefault}{\mddefault}{\updefault}{\color[rgb]{0,0,0}(0)}%
}}}}
\put(3451,-1336){\makebox(0,0)[lb]{\smash{{\SetFigFont{8}{9.6}{\rmdefault}{\mddefault}{\updefault}{\color[rgb]{0,0,0}(0)}%
}}}}
\put(1051,-2761){\makebox(0,0)[lb]{\smash{{\SetFigFont{8}{9.6}{\rmdefault}{\mddefault}{\updefault}{\color[rgb]{0,0,0}(0)}%
}}}}
\put(3526,-2761){\makebox(0,0)[lb]{\smash{{\SetFigFont{8}{9.6}{\rmdefault}{\mddefault}{\updefault}{\color[rgb]{0,0,0}(0)} }}}}
\put(901,-1786){\makebox(0,0)[lb]{\smash{{\SetFigFont{8}{9.6}{\rmdefault}{\mddefault}{\updefault}{\color[rgb]{0,0,0}$v_1$}%
}}}}
\put(3301,-1786){\makebox(0,0)[lb]{\smash{{\SetFigFont{8}{9.6}{\rmdefault}{\mddefault}{\updefault}{\color[rgb]{0,0,0}$v_1'$} }}}}
\put(2326,-1336){\makebox(0,0)[lb]{\smash{{\SetFigFont{8}{9.6}{\rmdefault}{\mddefault}{\updefault}{\color[rgb]{0,0,0}$v_0$} }}}}
{\color[rgb]{0,0,0}\put(2401,-1561){\circle{150}}
}%
\end{picture}%
%%%%%%%%%%%%%%%%%%%%%%%%%%%%%%%%%%%%%%%%%%%%%%%%%%%%%%%%%%%%%%%%%%%%%%%%%%%%%%%%%%%%%%%%%%%%%%%%%%%%%%%%%
%
\caption{One dicritical on each side of the root.}
\label {78r273wegadskjh278}
\end{center}
\end{figure}  
\begin{figure}[ht]
\begin{center}
%%%%%%%%%%%%%%%%%%%%%%%%%%%%%%%%%%%%%%%%%%%%%%%%%%%%%%%%%%%%%%%%%%%%%%%%%%%%%%%%%%%%%%%%%%%%%%%%%%%%%%%%%
\setlength{\unitlength}{2368sp}%
\begingroup\makeatletter\ifx\SetFigFont\undefined%
\gdef\SetFigFont#1#2#3#4#5{%
  \reset@font\fontsize{#1}{#2pt}%
  \fontfamily{#3}\fontseries{#4}\fontshape{#5}%
  \selectfont}%
\fi\endgroup%
\begin{picture}(5349,3736)(439,-6725)
{\color[rgb]{0,0,0}\thinlines \put(1201,-3361){\circle*{150}} }%
{\color[rgb]{0,0,0}\put(1201,-4561){\circle*{150}}
}%
{\color[rgb]{0,0,0}\put(2401,-5761){\circle*{150}}
}%
{\color[rgb]{0,0,0}\put(4801,-3361){\circle*{150}}
}%
{\color[rgb]{0,0,0}\put(3601,-3361){\circle{150}}
}%
{\color[rgb]{0,0,0}\put(1126,-3361){\vector(-1, 0){675}}
}%
{\color[rgb]{0,0,0}\put(1126,-4561){\vector(-1, 0){675}}
}%
{\color[rgb]{0,0,0}\put(1201,-3436){\vector( 0,-1){525}}
}%
{\color[rgb]{0,0,0}\put(1201,-4636){\vector( 0,-1){675}}
}%
{\color[rgb]{0,0,0}\put(1276,-3361){\line( 1, 0){1050}}
}%
{\color[rgb]{0,0,0}\put(1276,-4561){\line( 1, 1){1087.500}}
}%
{\color[rgb]{0,0,0}\put(2401,-3436){\line( 0,-1){2250}}
}%
{\color[rgb]{0,0,0}\put(2326,-5761){\vector(-1, 0){750}}
}%
{\color[rgb]{0,0,0}\put(2401,-5836){\vector( 0,-1){675}}
}%
{\color[rgb]{0,0,0}\put(4801,-3436){\vector( 0,-1){750}}
}%
{\color[rgb]{0,0,0}\put(4876,-3361){\vector( 1, 0){900}}
}%
{\color[rgb]{0,0,0}\put(2476,-3361){\line( 1, 0){1050}}
}%
{\color[rgb]{0,0,0}\put(3676,-3361){\line( 1, 0){1050}}
}%
{\color[rgb]{0,0,0}\put(1201,-3436){\line( 0,-1){375}}
}%

\put(1051,-4111){\makebox(0,0)[lb]{\smash{{\SetFigFont{7}{8.4}{\rmdefault}{\mddefault}{\updefault}{\color[rgb]{0,0,0}(0)} }}}}
\put(2326,-6661){\makebox(0,0)[lb]{\smash{{\SetFigFont{7}{8.4}{\rmdefault}{\mddefault}{\updefault}{\color[rgb]{0,0,0}(0)} }}}}
\put(4651,-4336){\makebox(0,0)[lb]{\smash{{\SetFigFont{7}{8.4}{\rmdefault}{\mddefault}{\updefault}{\color[rgb]{0,0,0}(0)} }}}}
\put(1126,-3136){\makebox(0,0)[lb]{\smash{{\SetFigFont{7}{8.4}{\rmdefault}{\mddefault}{\updefault}{\color[rgb]{0,0,0}(0)} }}}}
\put(1050,-4400){\makebox(0,0)[lb]{\smash{{\SetFigFont{7}{8.4}{\rmdefault}{\mddefault}{\updefault}{\color[rgb]{0,0,0}(0)} }}}}
\put(2251,-3136){\makebox(0,0)[lb]{\smash{{\SetFigFont{7}{8.4}{\rmdefault}{\mddefault}{\updefault}{\color[rgb]{0,0,0}(1)} }}}}
\put(2551,-5761){\makebox(0,0)[lb]{\smash{{\SetFigFont{7}{8.4}{\rmdefault}{\mddefault}{\updefault}{\color[rgb]{0,0,0}(0)} }}}}
\put(4651,-3136){\makebox(0,0)[lb]{\smash{{\SetFigFont{7}{8.4}{\rmdefault}{\mddefault}{\updefault}{\color[rgb]{0,0,0}(0)} }}}}
\put(1051,-5461){\makebox(0,0)[lb]{\smash{{\SetFigFont{7}{8.4}{\rmdefault}{\mddefault}{\updefault}{\color[rgb]{0,0,0}(0)} }}}}

\put(3550,-3586){\makebox(0,0)[lb]{\smash{{\SetFigFont{7}{8.4}{\rmdefault}{\mddefault}{\updefault}{\color[rgb]{0,0,0}$v_0$} }}}}

\put(2026,-3511){\makebox(0,0)[lb]{\smash{{\SetFigFont{7}{8.4}{\rmdefault}{\mddefault}{\updefault}{\color[rgb]{0,0,0}$v$} }}}}

\put(2101,-5986){\makebox(0,0)[lb]{\smash{{\SetFigFont{7}{8.4}{\rmdefault}{\mddefault}{\updefault}{\color[rgb]{0,0,0}$v_1$} }}}}

\put(901,-4861){\makebox(0,0)[lb]{\smash{{\SetFigFont{7}{8.4}{\rmdefault}{\mddefault}{\updefault}{\color[rgb]{0,0,0}$v_2$} }}}}

\put(901,-3586){\makebox(0,0)[lb]{\smash{{\SetFigFont{7}{8.4}{\rmdefault}{\mddefault}{\updefault}{\color[rgb]{0,0,0}$v_r$} }}}}

\put(4501,-3586){\makebox(0,0)[lb]{\smash{{\SetFigFont{7}{8.4}{\rmdefault}{\mddefault}{\updefault}{\color[rgb]{0,0,0}$v_1'$} }}}}

{\color[rgb]{0,0,0}\put(2401,-3361){\circle{150}} }
\end{picture}%
%%%%%%%%%%%%%%%%%%%%%%%%%%%%%%%%%%%%%%%%%%%%%%%%%%%%%%%%%%%%%%%%%%%%%%%%%%%%%%%%%%%%%%%%%%%%%%%%%%%%%%%%%
\caption{One dicritical on one side of the root, and $r\ge2$ dicriticals on the other side.}
\label {q723y87c23redhsi}
\end{center}
\end{figure} 
\begin{figure}[ht]
\begin{center}
%%%%%%%%%%%%%%%%%%%%%%%%%%%%%%%%%%%%%%%%%%%%%%%%%%%%%%%%%%%%%%%%%%%%%%%%%%%%%%%%%%%%%%%%%%%%%%%%%%%%%%%%%
\setlength{\unitlength}{.8mm}%
\begin{picture}(100,52)(-50,-48.5)

\put(-40,0){\circle*{1.7}}
\put(40,0){\circle*{1.7}}
\put(-20,0){\circle{1.7}}
\put(0,0){\circle{1.7}}
\put(20,0){\circle{1.7}}
\put(-20,-35){\circle*{1.7}}
\put(20,-35){\circle*{1.7}}

\put(-40,0){\vector(-1,0){10}}
\put(-40,0){\vector(0,-1){10}}
\put(-20,-35){\vector(-1,0){10}}
\put(-20,-35){\vector(0,-1){10}}

\put(40,0){\vector(1,0){10}}
\put(40,0){\vector(0,-1){10}}
\put(20,-35){\vector(1,0){10}}
\put(20,-35){\vector(0,-1){10}}

\put(-19.1,0){\line(1,0){18.2}}
\put(-39.1,0){\line(1,0){18.2}}
\put(0.9,0){\line(1,0){18.2}}
\put(20.9,0){\line(1,0){18.2}}

\put(20,-.9){\line(0,-1){33.2}}
\put(-20,-.9){\line(0,-1){33.2}}

%%%%%%%%%%%
\put(-40,-20){\circle*{1.7}}
\put(-40,-20){\vector(-1,0){10}}
\put(-40,-20){\vector(0,-1){10}}
\put(-40,-20){\line(1,1){19.3}}
\put(-40,-19){\makebox(0,0)[rb]{\tiny $(0)$}}
\put(-40,-31){\makebox(0,0)[t]{\tiny $(0)$}}
\put(-41,-21){\makebox(0,0)[rt]{\tiny $v_2$}}
%%%%%%%%%%%

\put(-40,1.5){\makebox(0,0)[b]{\tiny $(0)$}}
\put(-41,-10){\makebox(0,0)[r]{\tiny $(0)$}}
\put(-20,1.5){\makebox(0,0)[b]{\tiny $(1)$}}
\put(20,1.5){\makebox(0,0)[b]{\tiny $(1)$}}
\put(40,1.5){\makebox(0,0)[b]{\tiny $(0)$}}
\put(-20,-46){\makebox(0,0)[t]{\tiny $(0)$}}
\put(-19,-36){\makebox(0,0)[lb]{\tiny $(0)$}}
\put(-21,-36){\makebox(0,0)[rt]{\tiny $v_1$}}
\put(-41,-1){\makebox(0,0)[rt]{\tiny $v_r$}}
\put(21,-36){\makebox(0,0)[lt]{\tiny $v_1'$}}
\put(41,-1){\makebox(0,0)[lt]{\tiny $v_2'$}}

\put(41,-10){\makebox(0,0)[l]{\tiny $(0)$}}
\put(19,-36){\makebox(0,0)[rb]{\tiny $(0)$}}
\put(20,-46){\makebox(0,0)[t]{\tiny $(0)$}}

\put(0,-1.5){\makebox(0,0)[t]{\tiny $v_0$}}
\put(-19,-1){\makebox(0,0)[lt]{\tiny $v$}}
\put(19,-1){\makebox(0,0)[rt]{\tiny $v'$}}

\end{picture}
\caption{Two dicriticals on one side of the root, and $r\ge2$ dicriticals on the other side.}
\label {273r872qhwudsaku32}
\end{center}
\end{figure}  

\begin{proof}
Let $\Teul$ be a minimally complete Newton tree at infinity satisfying the above hypotheses.
By \ref{8712e7g1duddddwcbkdu3} and \ref{8123oiuwhqd8d2}, $S(\Teul)$ has the following properties:
\begin{itemize}

\item[(i)] the root has valency $2$;

\item[(ii)] there exists a dead end incident to a vertex $v$ if and only if $v$ is a dicritical (i.e., is decorated by~$(0)$);

\item[(iii)] all vertices other than $v_0$ are decorated by $(0)$ or $(1)$;

\item[(iv)] if $[v,v']$ is an edge in $S(\Teul)$ where $v_0 \neq v<v'$ are vertices and $v$ is decorated by $(1)$,
then $v'$ is a dicritical;

\item[(v)] every dicritical has degree $1$.

\end{itemize}

Note that claim (iv) follows from Lemma~\ref{8123oiuwhqd8d2}.  Let us justify claim (v).
Let $v'$ be a dicritical vertex.
If there exists a vertex of multiplicity $1$ then there exists a linear path from $v'$ to some vertex $v$
such that $N_v=1$; then the degree of the dicritical $v'$ divides $N_v$
(by \ref{i78321723ye8}\eqref{32d3wsxbei23}) and hence is equal to $1$.
Consider the case where there is no vertex of multiplicity $1$ in $\Teul$.
Then $\Veul = \{ v_0, v, v' \}$ where $v$ and $v'$ are dicriticals.
Moreover, $v$ is a companion of $v'$, so condition~(2) of Lemma~\ref{o9f83e9udos9} is satisfied;
it follows that the shadow of $\Teul$ is the one shown in Figure~\ref{78r273wegadskjh278},
in which case claim~(v) is true.  So claim~(v) is true in all cases.

If there is no vertex decorated by $(1)$ then,
as we saw in the preceding paragraph,
$S(\Teul)$ is as in Figure \ref{78r273wegadskjh278};
in this case we have exactly two dicriticals.
If there is one vertex decorated with $(1)$ then $S(\Teul)$ is as in Figure \ref{q723y87c23redhsi};
in this case the number of dicriticals is not bounded.

If there are two vertices with multiplicity $(1)$ then, a priori, $S(\Teul)$ is as in Figure \ref{q723roqudh8qhax}(a),
where we have $r\ge2$ dicriticals on one side of the root, and $s\ge2$ dicriticals on the other side.
To prove that $S(\Teul)$ is as in Figure~\ref{273r872qhwudsaku32}, it's enough to show that $\min(r,s)=2$.

\begin{figure}[ht]
\begin{center}
%%%%%%%%%%%%%%%%%%%%%%%%%%%%%%%%%%%%%%%%%%%%%%%%%%%%%%%%%%%%%%%%%%%%%%%%%%%%%%%%%%%%%%%%%%
\setlength{\unitlength}{.7mm}%
\begin{picture}(100,57)(-50,-52.5)

\put(-40,0){\circle*{1.7}}
\put(40,0){\circle*{1.7}}
\put(-20,0){\circle{1.7}}
\put(0,0){\circle{1.7}}
\put(20,0){\circle{1.7}}
\put(-20,-35){\circle*{1.7}}
\put(20,-35){\circle*{1.7}}

\put(-40,0){\vector(-1,0){10}}
\put(-40,0){\vector(0,-1){10}}
\put(-20,-35){\vector(-1,0){10}}
\put(-20,-35){\vector(0,-1){10}}

\put(40,0){\vector(1,0){10}}
\put(40,0){\vector(0,-1){10}}
\put(20,-35){\vector(1,0){10}}
\put(20,-35){\vector(0,-1){10}}

\put(-19.1,0){\line(1,0){18.2}}
\put(-39.1,0){\line(1,0){18.2}}
\put(0.9,0){\line(1,0){18.2}}
\put(20.9,0){\line(1,0){18.2}}

\put(20,-.9){\line(0,-1){33.2}}
\put(-20,-.9){\line(0,-1){33.2}}

%%%%%%%%%%%
\put(-40,-20){\circle*{1.7}}
\put(-40,-20){\vector(-1,0){10}}
\put(-40,-20){\vector(0,-1){10}}
\put(-40,-20){\line(1,1){19.3}}
\put(-40,-19){\makebox(0,0)[rb]{\tiny $(0)$}}
\put(-40,-31){\makebox(0,0)[t]{\tiny $(0)$}}
\put(-41,-21){\makebox(0,0)[rt]{\tiny $v_2$}}
%%%%%%%%%%%

%%%%%%%%%%%
\put(40,-20){\circle*{1.7}}
\put(40,-20){\vector(1,0){10}}
\put(40,-20){\vector(0,-1){10}}
\put(40,-20){\line(-1,1){19.3}}
\put(40,-19){\makebox(0,0)[lb]{\tiny $(0)$}}
\put(40,-31){\makebox(0,0)[t]{\tiny $(0)$}}
\put(41,-21){\makebox(0,0)[lt]{\tiny $v_2'$}}
%%%%%%%%%%%

\put(-40,1.5){\makebox(0,0)[b]{\tiny $(0)$}}
\put(-41,-10){\makebox(0,0)[r]{\tiny $(0)$}}
\put(-20,1.5){\makebox(0,0)[b]{\tiny $(1)$}}
\put(20,1.5){\makebox(0,0)[b]{\tiny $(1)$}}
\put(40,1.5){\makebox(0,0)[b]{\tiny $(0)$}}
\put(-20,-46){\makebox(0,0)[t]{\tiny $(0)$}}
\put(-19,-36){\makebox(0,0)[lb]{\tiny $(0)$}}
\put(-21,-36){\makebox(0,0)[rt]{\tiny $v_1$}}
\put(-41,-1){\makebox(0,0)[rt]{\tiny $v_r$}}
\put(21,-36){\makebox(0,0)[lt]{\tiny $v_1'$}}
\put(41,-1){\makebox(0,0)[lt]{\tiny $v_s'$}}

\put(41,-10){\makebox(0,0)[l]{\tiny $(0)$}}
\put(19,-36){\makebox(0,0)[rb]{\tiny $(0)$}}
\put(20,-46){\makebox(0,0)[t]{\tiny $(0)$}}

\put(0,-1.5){\makebox(0,0)[t]{\tiny $v_0$}}
\put(-19,-1){\makebox(0,0)[lt]{\tiny $v$}}
\put(19,-1){\makebox(0,0)[rt]{\tiny $v'$}}

\put(0,-55){\makebox(0,0)[t]{(a) The shadow $S(\Teul)$ of $\Teul$.}}
\end{picture}
\hfill
\begin{picture}(100,57)(-50,-52.5)

\put(-40,0){\circle*{1.7}}
\put(40,0){\circle*{1.7}}
\put(-20,0){\circle{1.7}}
\put(0,0){\circle{1.7}}
\put(20,0){\circle{1.7}}
\put(-20,-35){\circle*{1.7}}
\put(20,-35){\circle*{1.7}}

\put(-40,0){\vector(-1,0){10}}
\put(-40,0){\vector(0,-1){10}}
\put(-20,-35){\vector(-1,0){10}}
\put(-20,-35){\vector(0,-1){10}}

\put(40,0){\vector(1,0){10}}
\put(40,0){\vector(0,-1){10}}
\put(20,-35){\vector(1,0){10}}
\put(20,-35){\vector(0,-1){10}}

\put(-19.1,0){\line(1,0){18.2}}
\put(-39.1,0){\line(1,0){18.2}}
\put(0.9,0){\line(1,0){18.2}}
\put(20.9,0){\line(1,0){18.2}}

\put(20,-.9){\line(0,-1){33.2}}
\put(-20,-.9){\line(0,-1){33.2}}

%%%%%%%%%%%
\put(-40,-20){\circle*{1.7}}
\put(-40,-20){\vector(-1,0){10}}
\put(-40,-20){\vector(0,-1){10}}
\put(-40,-20){\line(1,1){19.3}}
\put(-40,-31){\makebox(0,0)[t]{\tiny $(0)$}}
\put(-41,-21){\makebox(0,0)[rt]{\tiny $v_2$}}
\put(-39,-21){\makebox(0,0)[lt]{\tiny $a_2$}}
%%%%%%%%%%%

%%%%%%%%%%%
\put(40,-20){\circle*{1.7}}
\put(40,-20){\vector(1,0){10}}
\put(40,-20){\vector(0,-1){10}}
\put(40,-20){\line(-1,1){19.3}}
\put(40,-31){\makebox(0,0)[t]{\tiny $(0)$}}
\put(41,-21){\makebox(0,0)[lt]{\tiny $v_2'$}}
\put(39,-21){\makebox(0,0)[rt]{\tiny $a_2'$}}
%%%%%%%%%%%

\put(-41,-10){\makebox(0,0)[r]{\tiny $(0)$}}
\put(-20.5,1.5){\makebox(0,0)[br]{\tiny $v$}}
\put(-17,1.5){\makebox(0,0)[bl]{\tiny $q$}}
\put(17,1.5){\makebox(0,0)[br]{\tiny $q'$}}
\put(20.5,1.5){\makebox(0,0)[bl]{\tiny $v'$}}
\put(-20,-46){\makebox(0,0)[t]{\tiny $(0)$}}
\put(-21,-36){\makebox(0,0)[rt]{\tiny $v_1$}}
\put(-19,-36){\makebox(0,0)[lt]{\tiny $a_1$}}
\put(-41,-1){\makebox(0,0)[rt]{\tiny $v_r$}}
\put(-39,-1){\makebox(0,0)[lt]{\tiny $a_r$}}
\put(21,-36){\makebox(0,0)[lt]{\tiny $v_1'$}}
\put(19,-36){\makebox(0,0)[rt]{\tiny $a_1'$}}
\put(41,-1){\makebox(0,0)[lt]{\tiny $v_s'$}}
\put(39,-1){\makebox(0,0)[rt]{\tiny $a_s'$}}

\put(41,-10){\makebox(0,0)[l]{\tiny $(0)$}}
\put(20,-46){\makebox(0,0)[t]{\tiny $(0)$}}

\put(0,1.5){\makebox(0,0)[b]{\tiny $v_0$}}
\put(-19,-4){\makebox(0,0)[lt]{\tiny $a$}}
\put(19,-4){\makebox(0,0)[rt]{\tiny $a'$}}

\put(0,-55){\makebox(0,0)[t]{(b) The tree $\Teul$.}}

\end{picture}
%%%%%%%%%%%%%%%%%%%%%%%%%%%%%%%%%%%%%%%%%%%%%%%%%%%%%%%%%%%%%%%%%%%%%%%%%%%%%%%%%%%%%%%%%%
\caption{}
\label {q723roqudh8qhax}
\end{center}
\end{figure}

The tree $\Teul$ itself is partially depicted in Figure \ref{q723roqudh8qhax}(b) (the only edge decorations that
appear in the picture are those that play a role in the calculation below).
We denote by $v$ and $v'$ the  vertices of multiplicity $(1)$.
Let $q = q( [v_0,v], v)$ and $q' = q( [v_0,v'], v')$.
Let $v_1, \dots, v_r$ be the dicriticals which are linked to $v$ by an edge,
and $v_1', \dots, v_s'$ those which are linked to $v'$.
Let $\epsilon_i$ (resp.\ $\epsilon_i'$) be the unique dead end incident to $v_i$ (resp.\ $v_i'$).
We arrange the labelling so that $a = q( [v,v_1], v )$ is the maximum
of the decorations near $v$  on edges $[v,w]$ with $w>v$, and similarly for $a'  = q( [v',v_1'], v' )$.
Let $a_i = q(\epsilon_i,v_i)$ and $a_i' = q(\epsilon_i',v_i')$.
We set $n=\sum_{i=2}^r a_i$ and $n'=\sum_{i=2}^s a_i'$.
Then $ a, a', a_1, a_1', n, n' \ge 1 $ and we claim that $\min(n,n')=1$.

We have the following set of equations:
\begin{align}
\label {c8924982ywduh}
q(an+a_1) + a(a'n'+a_1') &= N_v = 1 \\
\label {o278c98172egh}
a'(an+a_1) + q'(a'n'+a'_1) &= N_{v'} = 1 .
\end{align}
It follows that $q,q' < 0$.
Let $k=aa'-qq'$.  From \eqref{c8924982ywduh} and \eqref{o278c98172egh}, we obtain
\begin{equation*}
k(a'n'+a_1') = a'-q  \text{\ \ and\ \ }  k(an+a_1) = a-q'
\end{equation*}
so that $k \ge 1$ and:
\begin{align*}
q &= a' - k(a'n'+a_1') = a'(1-kn') - k a_1' \\
q' &= a - k(an+a_1) = a(1-kn) - k a_1 .
\end{align*}
Then $k= aa'-qq' = aa' - [ a'(1-kn') - k a_1' ] [ a(1-kn) - k a_1 ]$, so
$$
aa' ( knn'- n - n') = aa_1'(1-kn) + a' a_1(1-kn') - ka_1a_1' - 1,
$$
which implies that $knn'-n-n'\leq -1$ and hence that $k=1=\min(n,n')$.
As $n \ge r-1$ and $n' \ge s-1$, we get $\min(r,s)=2$.
So $S(\Teul)$ is as depicted in Figure~\ref{273r872qhwudsaku32} and moreover $aa'-qq'=1$.

We showed that, in all cases, $S(\Teul)$ is 
one of Figures \ref{78r273wegadskjh278}, \ref{q723y87c23redhsi} and~\ref{273r872qhwudsaku32}.
For the converse, see~\ref{236xj19832y162egdd}.
\end{proof}

\begin{parag} \label {236xj19832y162egdd}
For each shadow depicted in Figures \ref{78r273wegadskjh278}, \ref{q723y87c23redhsi} and~\ref{273r872qhwudsaku32},
we find all minimally complete Newton trees at infinity satisfying the hypotheses of \ref{8c3r8273d287ed2uq98}
and having that shadow.
We give the solution and leave the verification to the reader.

Consider Figure \ref{78r273wegadskjh278}.
Let $\epsilon_1$ and $\epsilon_1'$ be the dead ends incident to $v_1,v_1'$ respectively.
We seek all $a_1,a_1',q_1,q_1'$ such that by setting 
$$
q(\epsilon_1,v_1) = a_1,\ \  q(\epsilon_1',v_1') = a_1',\ \  q( [v_0,v_1],v_1) = q_1,\ \   q( [v_0,v_1'],v_1') = q_1'
$$
we obtain a tree with the desired shadow.
The complete solution is obtained by choosing relatively prime positive integers $a_1,a_1'$ and setting
$q_1 = -a_1'$ and $q_1' = -a_1$.

\medskip
Consider Figure \ref{q723y87c23redhsi}, where $r\ge2$ is arbitrary.
Let $\epsilon_i$  be the dead end incident to $v_i$ ($1 \le i \le r$) and $\epsilon_1'$ that incident to $v_1'$.
We seek all $a, a_1, \dots, a_r, a_1', q, q_1, \dots, q_r, q_1'$ such that by setting
\begin{align}
\label {817d481762rted}
&\left\{\begin{array}{l}
q( [v,v_1], v ) = a,\ \  q( [v,v_i], v )=1\ (2 \le i \le r),\ \  q( [v_0,v], v ) = q, \\[1mm]
q( \epsilon_i, v_i ) = a_i \text{\ \ and\ \ }  q( [v,v_i], v_i ) = q_i \ \  \text{for $i=1,\dots,r$,}
\end{array} \right. \\[1mm]
& \ \ \ q( \epsilon_1', v_1') = a_1', \ \   q( [v_0,v_1'], v_1' ) = q_1'  
\end{align}
we obtain a tree with the desired shadow. The solution is as follows: 
choose any positive integers $a, a_1, \dots, a_r, a_1'$ such that
$a a_1' \equiv 1 \pmod{an+a_1}$, where $n=\sum_{i=2}^r a_i$.
Then define $q = (1-a a_1')/(an+a_1)$ and let $q_1, \dots, q_r, q_1'$ be given by
$$
q_1 = -q n - a_1',\quad q_i = q a a_i - 1\ (2 \le i \le r),\quad q_1' = -a n - a_1.
$$

\medskip
Consider Figure \ref{273r872qhwudsaku32}, where $r\ge2$ is arbitrary.
Let $\epsilon_i$  be the dead end incident to $v_i$ ($1 \le i \le r$)
and $\epsilon_i'$ that incident to $v_i'$ ($i=1, 2$).
We seek all $a, a_1, \dots, a_r, a', a_1', a_2' q, q_1, \dots, q_r, q_1', q_2'$ such that by setting
\eqref{817d481762rted} and
\begin{equation*}
\left\{ \begin{array}{l}
q( [v',v_1'], v' ) = a',\ \  q( [v',v_2'], v' )=1,\ \  q( [v_0,v'], v' ) = q', \\[1mm]
q( \epsilon_i', v_i' ) = a_i' \text{\ \ and\ \ }  q( [v',v_i'], v_i' ) = q_i' \ \  \text{for $i=1,2$,}
\end{array} \right.
\end{equation*}
we obtain a tree with the desired shadow. The solution is as follows: 
set $a_2'=1$ and choose any positive integers
$a, a_1, \dots, a_r, a', a_1'$ such that $a_1'( a(1-n) - a_1 )+aa' = 1$, where $n=\sum_{i=2}^r a_i$.
Define $q = -a_1'$, $q'= a(1-n)-a_1$ and let $q_1, \dots, q_r, q_1', q_2'$ be given by
$$
\begin{array}{ll}
q_1 = -qn - a' - a_1' & q_i = - aq (n-a_i) - q a_1 - a(a'+a_1')\ (2 \le i \le r), \\[1mm]
q_1' = -q' - (an+a_1) & q_2' = -q'a_1' - a'(an+a_1) .
\end{array}
$$
\end{parag}

\section{Application to rational polynomials of simple type}

Although Neumann and Norbury  \cite{NeumannNorbury:simple} give explicit polynomials, their classification 
is initially presented in terms of the splice diagrams for the link at infinity of a generic fiber of the polynomial.

Consider a primitive polynomial $F(X,Y) \in \Comp[X,Y]$ with at least two points at infinity.
For each $\lambda \in \Comp$, consider the curve
$C_\lambda \subset \aff_\Comp^2$ defined by the equation $F(X,Y)=\lambda$
and let $i : \aff_\Comp^2 \to \proj^2_\Comp$ be the standard embedding $(x,y) \mapsto (x:y:1)$.
By Theorems~2 and 3 of \cite{Neumann:Inv}, the pair $(C_\lambda, i)$ determines a rooted  RPI
splice diagram  $\Omega_\lambda$ for the link of $C_\lambda$ at infinity.  The description appears on p.~448 of
\cite{Neumann:Inv} and properties of the decorations of the edges are given at the end of page 449. 
Replacing each vertex (other than the root) of valency $1$ in $\Omega_\lambda$
by an arrow decorated by $(0)$  produces an abstract Newton tree at infinity $\Teul(\lambda)$.
The multiplicity of a vertex $v$, denoted by $N_v$ here, is denoted in  \cite{Neumann:Inv} by $l_v$ (see Lemma 3.2).
The number of points at infinity of $F(X,Y)$ is equal to the number of points at infinity of $\Teul(\lambda)$.
The important result that we need is Theorem 4.3 of \cite{Neumann:Inv} which implies that the multiplicity $M(\Teul(\lambda))$ is the Euler characteristic of $C_\lambda$.
Recall that for an algebraic curve $C$ 
\begin{equation} \label {f0932d9230}
\chi (C)=2-2g-n
\end{equation}
where $g$ is the genus of $C$ and $n$ its number of places at infinity.

There exists a nonempty Zariski open subset $U$ of $\Comp$ such that
$\Teul(\lambda)$ is generic and independent of $\lambda \in U$ up to equivalence (i.e., up to the equivalence relation defined in Def.\ \ref{83724d91872trhy}).
We denote by $\Teul(F)$ the unique minimally complete Newton tree in this class.
We have $M( \Teul(F) ) = M( \Teul(\lambda) )$ by \ref{0923923ujds0}, so
\begin{equation} \label {f92392e9q089}
M( \Teul(F) ) = \chi (C_\lambda)\ \ \text{for all $\lambda \in U$.}
\end{equation}

Moreover, by \cite[Lemma, p.\ 305]{BartoCassou:RemPolysTwoVars},
there is a bijection between the set of dicriticals of $F : \aff^2_\Comp \to \aff^1_\Comp$
and the set of dicriticals of $\Teul(F)$,
under which corresponding dicriticals have the same degree.

\begin{proposition} \label {jkcnvo2ws9cj}
Let $F \in A = \Comp^{[2]}$ be a rational polynomial of simple type which is not a variable of $A$.
Then there exist $X,Y$ such that $A = \Comp[X,Y]$ and $F(X,Y)$ has two points at infinity.
Choose such a generating pair $X,Y$ for $A$, and define $\Teul(F)$ as in the above paragraphs
(the definition of $\Teul(F)$ depends on the choice of a generating pair).
Then $\Teul(F)$  satisfies all hypotheses of Theorem \ref{8c3r8273d287ed2uq98}.
Consequently, the shadow of $\Teul(F)$ is one of Figures \ref{78r273wegadskjh278},
\ref{q723y87c23redhsi}, \ref{273r872qhwudsaku32}.
\end{proposition}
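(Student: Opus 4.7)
The proof breaks into two tasks: first, choose a generating pair $(X,Y)$ of $A$ so that $F(X,Y)$ has at least two points at infinity; second, with such $(X,Y)$ fixed, verify each of the hypotheses of Theorem \ref{8c3r8273d287ed2uq98}. For the first task I would argue contrapositively: if for every generating pair $(X,Y)$ of $A$ the polynomial $F(X,Y)$ has only one point at infinity, then $F$ is a variable of $A$. This reduction is the main obstacle, and I expect it to rely on classical input---for instance the Abhyankar--Moh--Suzuki epimorphism theorem, possibly combined with a specific structural result for rational polynomials of simple type having one point at infinity in every coordinate system (the idea being that a generic fiber with a single place at infinity must be $\aff^1$, forcing $F$ to be a variable).

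Once $(X,Y)$ is fixed, the remaining verifications are comparatively direct. By construction $\Teul(F)$ is the unique minimally complete representative of its equivalence class, hence is minimally complete. By Lemma \ref{0923923ujds0} its number of points at infinity equals that of $\Teul(\lambda)$ for $\lambda \in U$, which by the construction reviewed at the start of Section~4 equals the number of points at infinity of $F(X,Y)$; by the choice of $(X,Y)$ this number is at least two. Let $d$ denote the number of dicriticals of $\Teul(F)$. The degree-preserving bijection of \cite{BartoCassou:RemPolysTwoVars} matches the dicriticals of $\Teul(F)$ with those of $F$, and since $F$ is of simple type each dicritical of $F$ has degree $1$; hence each dicritical of $\Teul(F)$ has degree $1$, and in particular the gcd of their degrees is $1$.

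It remains to show $M(\Teul(F)) = 2 - d$. Combining \eqref{f92392e9q089} with \eqref{f0932d9230} and the fact that $g(C_\lambda) = 0$ (which is the definition of $F$ being rational) gives $M(\Teul(F)) = \chi(C_\lambda) = 2 - n$, where $n$ is the number of places at infinity of $C_\lambda$. Because $F$ is of simple type, the restriction of $\overline{F}$ to any dicritical is a degree-one morphism to $\proj^1$, so each dicritical meets the generic fiber $C_\lambda$ in exactly one point; summing over the $d$ dicriticals yields $n = d$, so $M(\Teul(F)) = 2 - d$. This verifies the last hypothesis of Theorem \ref{8c3r8273d287ed2uq98}, whose conclusion then identifies the shadow of $\Teul(F)$ as one of the three listed figures.
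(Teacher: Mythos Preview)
Your verification of the hypotheses of Theorem~\ref{8c3r8273d287ed2uq98} (minimally complete, at least two points at infinity, $\gcd$ of dicritical degrees equal to $1$, and $M(\Teul(F))=2-d$) is essentially the same as the paper's, and is correct.

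The gap is in the first task. You correctly identify that this is the nontrivial step, but the sketch you offer does not close it. Your proposed route passes through the claim that a generic fiber has a \emph{single place} at infinity, which you would then feed into Abhyankar--Moh--Suzuki. However, the hypothesis you start from is that $F(X,Y)$ has a single \emph{point} at infinity (intersection with the line at infinity in $\proj^2$) for every generating pair; this is not the same thing as the generic fiber having a single place at infinity, and you give no mechanism for bridging the two notions. In fact the number of places at infinity of a generic fiber equals the number of dicriticals (as you yourself use later to get $n=d$), and there is no a priori reason this should drop to $1$ just because the closure in $\proj^2$ meets the line at infinity in a single point. So the argument, as written, does not go through.

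The paper handles this step by a direct citation: since a rational polynomial is a field generator, Russell's theorem \cite[Thm~4.5]{Rus:FieldGen} gives that any field generator in two variables which is not a variable admits a coordinate system in which it has exactly two points at infinity. That is the input you should invoke; once you have it, the rest of your argument stands.
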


\begin{proof}
Since $F$ is a rational polynomial which is not a variable of $A$,
\cite[Thm 4.5]{Rus:FieldGen} implies that there exist $X,Y$ such that $A = \Comp[X,Y]$ and
$F(X,Y)$ has two points at infinity. Choose such a generating pair $X,Y$ for $A$, and define $\Teul(F)$ as before.
Then $\Teul(F)$ has two points at infinity. 
Since all dicriticals of $F$ have degree $1$, the dicriticals of $\Teul(F)$ have the same property,
and in particular their gcd is $1$.
If $C_\lambda$ is a generic fiber of  $F$ then the number of places at infinity of $C_\lambda$ is equal to
the number of dicriticals of $F$, which is equal to the number $d$ of dicriticals of $\Teul(F)$;
so \eqref{f0932d9230} gives $ \chi (C_\lambda)=2-d $ and so \eqref{f92392e9q089} gives $ M( \Teul(F) ) = 2-d .$
So $\Teul(F)$ satisfies all hypotheses of Theorem \ref{8c3r8273d287ed2uq98}.
\end{proof}

In the Neumann and Norbury paper \cite{NeumannNorbury:simple},
the splice diagrams are given by Figures 8, 11 and 13 (actually Figure 11 is a special case of Figure 13).
Transforming the splice diagrams in Newton trees, we can see that the shadow of Figure 8 in \cite{NeumannNorbury:simple}
is Figure 4 of the present article, and that the shadow of Figure 13 is Figure 3.
Because their classification restricts itself to {\it ample\/} rational polynomials of simple type,
our Figure~2 does not appear in  \cite{NeumannNorbury:simple}.
By Prop.\ \ref{jkcnvo2ws9cj}, it follows that
{\it the splice diagrams given in  \cite{NeumannNorbury:simple} are correct.}
As explained in the introduction, this needed to be confirmed.
% For each splice diagram given in  \cite{NeumannNorbury:simple}, Neumann and Norbury give a rational polynomial of simple type that corresponds to the diagram.
% It follows that if $\Teul$ is a Newton tree that satisfies the hypothesis of Theorem \ref{8c3r8273d287ed2uq98} then $\Teul = \Teul(F)$ for some
% rational polynomial of simple type $F$. Thus:

\bibliographystyle{amsplain}
\bibliography{/Users/ddaigle/AAA/articles/bib/dbase}

\begin{thebibliography}{10}


\bibitem{BartoCassou:RemPolysTwoVars}
E.~Artal and P.~Cassou-Nogu\`es, \emph{One remark on polynomials in two
  variables}, Pacific J. of Math. \textbf{176} (1996), 297--309.


\bibitem{MiySugie:GenRatPolys}
M.~Miyanishi and T.~Sugie, \emph{Generically rational polynomials}, Osaka J.\
  Math. \textbf{17} (1980), 339--362.
  
  \bibitem{Neumann:Inv}
W.~D. Neumann, \emph{Complex algebraic plane curves via their links at infinity}, Invent. Math. \textbf{98} (1989), 445-489.   

\bibitem{NeumannNorbury:simple}
W.~D. Neumann and P.~Norbury, \emph{Rational polynomials of simple type},
  Pacific J. Math. \textbf{204} (2002), 177--207.


\bibitem{Rus:FieldGen}
K.P. Russell, \emph{Field generators in two variables}, J. Math. Kyoto Univ.
  \textbf{15} (1975), 555--571.


\bibitem{Sasao_QuasiSimple2006}
I.\ Sasao, \emph{Generically rational polynomials of quasi-simple type}, J.
  Algebra \textbf{298} (2006), 58--104.


\end{thebibliography}

\end{document}